\newcommand{\Ber}{\mathrm{Ber}}
\newcommand{\ev}{\mathrm{ev}}
\newcommand{\odd}{\mathrm{odd}}
\newtheorem{thm}{Thm}[section]
\newtheorem{satz}[thm]{Theorem}
\newtheorem{lemma}[thm]{Lemma}
\newtheorem{prop}[thm]{Proposition}
\theoremstyle{definition}
\newtheorem{def1}[thm]{Definition}
\theoremstyle{remark}
\newtheorem{rem}[thm]{Remark}
\begin{document}
\title{Measurability of open orbits in flag supermanifolds}
\author{Christopher Graw}
\thanks{Partially supported by the SFB TR-12 of the Deutsche Forschungsgemeinschaft.}
\address{Fakult\"{a}t f\"{u}r Mathematik \\ Ruhr-Universit\"{a}t Bochum \\ D-44780 Bochum}
\email{christopher.graw@rub.de}
\keywords{Lie superalgebras, Homogeneous supermanifolds, Invariant Berezin integration}
\subjclass[2010]{Primary 58A50, 17B20}
\begin{abstract}
Let $G_\mathbb{R}$ be a Lie group and $G$ its complexification. An open $G_\mathbb{R}$-orbit in a 
$G$-flag manifold is measurable whenever it carries a $G_\mathbb{R}$-invariant volume element. In this paper the notion
of measurability is generalized to the supersymmetric setting and a classification of measurable open orbits 
in flag supermanifolds is given. 
\end{abstract}
\maketitle

\section{Introduction}

The orbit structure of the action of a real semisimple group $G_\mathbb{R}$ on a complex manifold $Z$ was first analyzed
 by J.A. Wolf in \cite{W} where for example it was proved that if $Z = G/P$ is a $G$-flag manifold of the complexification
of $G_\mathbb{R}$, then $G_\mathbb{R}$ has finitely many orbits on $Z$. In particular $G_\mathbb{R}$ has open orbits $D$ in $Z$
called flag domains. 

Of particular importance among these flag domains are the measurable ones, that is those carrying
a $G_\mathbb{R}$-invariant volume element. These open orbits allow natural exhaustion functions yielding an important
vanishing theorem for the cohomology of $G_\mathbb{R}$-homogeneous vector bundles on $D$(\cite{SW}). 
Moreover, measurability is used to create unitary structures on representation spaces of $G_\mathbb{R}$. 
A characterization of measurable open orbits in both root-theoretic and geometric terms is given in \cite{W}.
In particular, it is shown that measurability is a property of the ambient flag manifold and not of the open orbit in question.

In the present paper the notion of measurable open orbits is extended to the category of supermanifolds. It turns out that the 
characterization theorem from \cite{W} does not generalize verbatim. This motivates the introduction of two different notions 
of weak and strong measurability. The first of these is the natural generalization of measurability, whereas the second is 
designed to fulfill the requirements of the literal generalization of the Characterization Theorem in \cite{W}.

The main goal of this paper is to prove a detailed classification of measurable open orbits in complex flag supermanifolds.
This includes a list of the measurable open orbits in the classical case which in fact could have been directly derived from
the above-mentioned characterization in \cite{W}.
To this end we use the classification of complex simple Lie superalgebras in \cite{Kac} and their real forms in \cite{Par}, the classication of involutions of simple Lie superalgebras
in \cite{Ser} and results on the structure of parabolic subalgebras due to Onishchik and Ivanova (see \cite{OnI}). 
A table of the results is also given at the end of this text.

\section{Preliminaries}

The main problem adressed in this paper is the existence of an invariant Berezinian density, the natural generalization of an invariant volume element,
on a homogeneous supermanifold. This problem can be stated in terms of the isotropy representation at the neutral point which is a representation of a classical
Lie group on a Lie superalgebra (Theorem 4.13 in \cite{AH}). So the solution of the problem amounts to classical representation theory. 
Nevertheless we recall some basic notions from the theory of supermanifolds.

\subsection{Background on supermanifolds}

This paper uses supermanifolds in the sense of Kostant, Berezin and Leites, i.e. a supermanifold is a ringed space $X = (X_0, \mathcal{O}_X)$, which is locally
isomorphic to $\mathbb{C}^{p \vert q} = (\mathbb{C}^p, \mathcal{O} \otimes \bigwedge (\mathbb{C}^q)^*)$. The complex manifold $X_0$ is called the body of $X$
and $\mathcal{O}_X$ the sheaf of superfunctions on $X$. A morphism of supermanifolds is a morphism
of ringed spaces and the supermanifolds together with these morphisms form a category. Lie supergroups are group objects in the category of supermanifolds,
i.e. they come equipped with multiplication, inversion and unit morphisms satisfying the usual commutativity relations. 

Given a supermanifold $X$, its tangent sheaf is defined as $\mathcal{T}_X = Der \mathcal{O}_X$, the sheaf of derivations of $\mathcal{O}_X$. 
The tangent space $T_{X,x}$ can be identified with $\mathfrak{m}_x/\mathfrak{m}_x^2$, where $\mathfrak{m}_x$ is the unique maximal ideal of the local ring $\mathcal{O}_{X,x}$.
If $G$ is a Lie supergroup, then $\mathfrak{g} := T_{G,e}$ carries the structure of a Lie superalgebra, i.e. a $\mathbb{Z}/2$-graded vector space with a bracket $[\cdot,\cdot]: \mathfrak{g} \oplus \mathfrak{g} \rightarrow \mathfrak{g}$,
which is graded-anticommutative and satisfies the graded Jacoby identity. $\mathfrak{g}$ is called the Lie superalgebra of the super Lie group $G$.

Quotients of Lie supergroups by closed subgroups are defined for example in \cite{Kos} and it was shown there that the tangent space at the neutral point of a quotient $G/P$ is given by $\mathfrak{g}/\mathfrak{p}$.
In this paper $G$ is always a simple complex Lie supergroup and $P$ is a parabolic subgroup. This means that its body $P_0$ is a parabolic subgroup of $G_0$ and $\mathfrak{p}$ is a parabolic subsuperalgebra
of $\mathfrak{g}$ in the sense of \cite{OnI}. As shown in \cite{OnI}, if $\mathfrak{g}$ is one of the classical Lie superalgebras there is a 1-to-1-correspondence between conjugacy classes of parabolic subalgebras
of $\mathfrak{g}$ and flag types of an associated super vector space $V$. For this reason these quotients are called flag supermanifolds. Each flag type is given by a dimension sequence

\[ \delta = 0 \vert 0 < d_0^1 \vert d_1^1 < \ldots < d_0^k \vert d_1^k < n \vert m \]

\noindent and a flag of type $\delta$ is a sequence

\[ 0 \leq V_1 \leq \ldots \leq V_k \leq \mathbb{C}^{n \vert m} \]

\noindent of graded subspaces satisfying $\dim V_i = d_0^i \vert d_1^i$. Flag supermanifolds are defined as follows:
Given a dimension sequence $\delta$, the body $Z(\delta)_0$ is the set of all flags of type $\delta$.
By virtue of the above mentioned 1-1 correspondence there is, for every given flag $z \in Z(\delta)_0$, a unique maximal parabolic
subalgebra $\mathfrak{p} \leq \mathfrak{g}$ stabilizing $z$ and the conjugacy class of $\mathfrak{p}$ in $\mathfrak{g}$
depends only on $\delta$. Therefore we may define $Z(\delta) =  (Z(\delta)_0 , \varphi^* \mathcal{O}_{G/P})$, where
$\varphi : Z(\delta)_0 \rightarrow^\simeq G_0/P_0$ is the canonical isomorphism. The identification of $Z(\delta)$
with $G/P$ will be used implicitly throughout this paper. 

A Berezinian form is a global section of the Berezinian bundle $\Ber(X) = \Ber(T^*X)$. The corresponding sheaf $\mathcal{B}er = \Gamma(\cdot, \Ber(T^*X))$
is isomorphic to the sheafification of the presheaf $\bigwedge^{top} \Omega_{X,0} \otimes S^{top} \mathcal{T}_{X,1}$. Its sections
are given on a coordinate chart $U$ as $ \mathcal{D}(x,\xi) = f dx_1 \wedge \ldots \wedge dx_n \otimes \frac{\partial}{\partial \xi_1} \wedge \ldots \wedge \frac{\partial}{\partial \xi_m}$ for some $f \in \mathcal{O}_X(U)$.
In particularly, we are interested in the existence of invariant Berezinian forms on open $G_\mathbb{R}$-orbits $D \subseteq Z(\delta)$, where $G_\mathbb{R}$ is a real form of $G$.
The open orbit $D$ is called measurable if it allows a $G_\mathbb{R}$-invariant Berezinian form.
The following three symmetry conditions are central to the characterization of measurability in the supersymemtric setting:

\begin{def1}

Let $\delta$ be a dimension sequence. Then:

\begin{enumerate}
 \item $\delta$ is even-symmetric, if $d_0 \vert d_1 \in \delta$, if and only if $(n- d_0) \vert (m - d_1) \in \delta$
 \item $\delta$ is odd-symmetric, if $n=m$ and $d_0 \vert d_1 \in \delta$, if and only if $(n-d_1) \vert (n-d_0) \in \delta$
 \item $\delta$ is $\Pi$-symmetric , if $n = m$ and $d_0 = d_1$ for all $d_0 \vert d_1 \in \delta$.
\end{enumerate}
\end{def1}
 
Note that in the first and second case, as the product ordering on $\{0, \ldots, n\} \times \{0, \ldots, m\}$ is not total, unless $mn = 0$,
not every given dimension sequence can be enlarged to a symmetric dimension sequence. We call $\delta$ symmetrizable if it can be enlarged to
a symmetric dimension sequence.

\begin{rem}
The even and odd symmetry conditions are related to symmetries of the extended Dynkin diagram $A(n,m)$ in the following way:
Recall that the underlying graph of the extended Dynkin diagram is a cycle on $n+m+2$ nodes, two of which correspond to odd roots. Removing these two nodes
one obtains the disjoint union of the Dynkin diagrams $A_n$ and $A_m$. Now assume the nodes are labelled by simple roots and a parabolic subalgebra $\mathfrak{p}$
of $A(n,m)$ is given. A diagram automorphism of $A(n,m)$ must either fix the two odd roots or interchange them. Thus there are three possible automorphisms: a reflection $r_0$
fixing one or two even roots, a reflection $r_1$ fixing the two odd roots and the antipodal map $s$. The last two of these require $m = n$.

Let $J$ be the set of simple roots such that $\mathfrak{g}^{-\alpha} \subseteq \mathfrak{p}$ and let $Z(\delta) = G/P$. Then $\delta$  is even-symmetric if and only if $r_0(J) = J$.
It is odd-symmetric if and only if $r_1(J) = J$ and it is $\Pi$-symmetric, if $s(J) = J$.  
\end{rem}

It turns out that, unlike in the classical case, measurability is not equivalent to $P \cap \tau P$ being complex reductive:

\subsection{Example}

Let $n > 2$, $G = PSL_{n \vert n}(\mathbb{C})$, $G_\mathbb{R} = PSL_{n \vert n}(\mathbb{R})$ and $Z = \mathrm{Gr}_{1 \vert 1}(\mathbb{C}^{n \vert n})$, i.e. $Z = Z(\delta)$ for
$\delta = 0 \vert 0 < 1 \vert 1 < n \vert n$.
Then $D_0$ is the product of two copies of the open $SL_n(\mathbb{R})$-orbit in $\mathbb{P}^{n-1}(\mathbb{C})$, which is not measurable. 
In a suitable basis of $\mathbb{C}^{n \vert n}$, the involution $\tau$ defining the real form acts by $\tau(X) = A\overline{X}A$, where $A$ is the unit antidiagonal matrix. Therefore

\[ \mathfrak{p} \cap \tau\mathfrak{p} = \left\lbrace 
\begin{pmatrix} 
a_1 & v_1^T & 0 & \vline & a_2 & v_2^T & 0 \\ 
0 & X_1 & 0 & \vline & 0 & X_2 & 0 \\
0 & w_1^T & b_1 & \vline & 0 & w_2^T & b_2 \\ \hline
a_3 & v_3^T & 0 & \vline & a_4 & v_4^T & 0 \\ 
0 & X_3 & 0 & \vline & 0 & X_4 & 0 \\
0 & w_3^T & b_3 & \vline & 0 & w_4^T & b_4
\end{pmatrix} \in \mathfrak{g} : \begin{matrix} a_i,b_i \in \mathbb{C} \\ v_i,w_i \in \mathbb{C}^{n-2} \\ X_i \in \mathfrak{gl}_{n-2}(\mathbb{C})  \end{matrix} \right\rbrace \]

Let $\mathfrak{h}$ be the Cartan subalgebra of diagonal matrices. Then $\mathfrak{h}^*$  is generated by the functionals $x_i - x_j, x_i - y_j$ and $y_i - y_j$, where
$x_i$ is the $i^{th}$ diagonal entry of the upper left block and $y_j$ is the $j^{th}$ diagonal entry of the lower right block. So the root system $\Sigma(\mathfrak{g},\mathfrak{h})$
is given by 

\[ \Sigma(\mathfrak{g},\mathfrak{h}) = \{ x_i - x_j \vert 1 \leq i,j \leq n , i \neq j \} \cup  \{ y_i - y_j \vert 1 \leq i,j \leq n , i \neq j \}\]
\[ \cup  \{ \pm x_i \mp y_j \vert 1 \leq i,j \leq n \} \] 

The root spaces contributing to the quotient $\mathfrak{g}/(\mathfrak{p} \cap \tau\mathfrak{p})$
are precisely the ones corresponding to the roots $(x_1 - x_j),(x_1 - y_j),(y_1-x_j),(y_1-y_j) (j \neq 1)$ and
$(x_n - x_j),(x_n - y_j),(y_n-x_j),(y_n-y_j) (j \neq n)$. 
As their graded sum is zero, $\mathfrak{p}$ acts on $\mathfrak{g}/\mathfrak{p}$ and therefore on $(\mathfrak{g}/\mathfrak{p})^*$  with trace zero. Consequently, the
isotropy action of $P_{0\mathbb{R}}$ on  $\Ber(\mathfrak{g}/\mathfrak{p})^*$ is trivial. This implies
the existence of a $G_\mathbb{R}$-invariant Berezinian form on $D$.

To sum it up, $D$ carries a $G_\mathbb{R}$-invariant Berezinian form, even though $P_0 \cap \tau P_0$ is not reductive and $D_0$
is not measurable.

\section{Measurable flag domains}

The fact that there are flag domains in the supercase which allow an invariant Berezinian, but whose real stabiliser subgroups are not reductive,
shows that the characterization theorem from \cite{W} does not generalize verbatim. This fact motivates the introduction of two different notions
of measurability. The first of these is the naive generalization and the other one is a notion designed to fulfill the requirements of a characterization theorem
analogous to the one given in \cite{W}. 

\begin{def1}
 
Let $G$ be a complex reductive Lie supergroup, $G_\mathbb{R}$ a real form of $G$, $Z = G/P$ a flag supermanifold
and $D \subseteq Z$ an open $G_\mathbb{R}$-orbit. Then:

\begin{enumerate}
 \item $D$ is strongly measurable, if $D$ carries a $G_\mathbb{R}$-invariant Berezinian density and $D_0$ is measurable.
 \item $D$ is weakly measurable, if $D$ carries a $G_\mathbb{R}$-invariant Berezinian density and $D_0$ is not measurable.
\end{enumerate}

\end{def1}

Starting with a real supergroup orbit $D \subseteq Z$ such that $D_0$ is open there are three things to check:

\begin{itemize}
 \item Is the odd dimension of $D$ maximal? 
 \item Is $D$ strongly measurable?
 \item Is $D$ weakly measurable?
\end{itemize}

From now on let $G$ be a classical complex reductive Lie supergroup, $G_\mathbb{R} = \mathrm{Fix}(\tau)$ a real form of $G$, $P$ a parabolic subsupergroup of $G$,
$\mathfrak{g}, \mathfrak{g}_\mathbb{R}$ and $\mathfrak{p}$ the respective Lie superalgebras, $\mathfrak{h}$ a Cartan subalgebra of $\mathfrak{g}$,
$\Sigma = \Sigma(\mathfrak{g},\mathfrak{h})$ a root system and $\Phi^r = \{ \alpha \in \Sigma : \mathfrak{g}^\alpha \oplus \mathfrak{g}^{-\alpha} \in \mathfrak{p} \},
 \Phi^n = \{ \alpha \in \Sigma^+ : \mathfrak{g}^{-\alpha} \not\in \mathfrak{p} \}, \Phi^c = \{ \alpha \in \Sigma^- : \mathfrak{g}^\alpha \not\in \mathfrak{p} \}$.

A useful criterion for maximal odd dimension is given by the following codimension formula:

\begin{satz}
 
Let $Z = G/P$ be a flag supermanifold, $D \subseteq Z$ a real group orbit. Then

\[ \mathrm{codim}_Z(D) = \vert \Phi^c \cap \tau\Phi^c \vert \] 

\noindent Especially, $D$ is open, if and only if $\Phi^c \cap \tau \Phi^c$ is empty.

\end{satz}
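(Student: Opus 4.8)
The plan is to compute the dimension of the orbit $D$ by analyzing the tangent space at a point $z = eP$, using the identification $T_{Z,z} \cong \mathfrak{g}/\mathfrak{p}$ established in the preliminaries. Since $D = G_\mathbb{R}\cdot z$, the tangent space $T_{D,z}$ is the image of $\mathfrak{g}_\mathbb{R}$ under the projection $\pi\colon\mathfrak{g}\to\mathfrak{g}/\mathfrak{p}$, so $\mathrm{codim}_Z(D) = \dim_\mathbb{C}\big((\mathfrak{g}/\mathfrak{p})/\pi(\mathfrak{g}_\mathbb{R})\big)$, where dimension is taken as a super vector space (the formula should be read degreewise, and the count $|\Phi^c\cap\tau\Phi^c|$ counts roots with multiplicity/parity). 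First I would set up root-space bookkeeping: by definition $\Phi^c = \{\alpha\in\Sigma^- : \mathfrak{g}^\alpha\not\subseteq\mathfrak{p}\}$ indexes exactly a basis of a complement to $\mathfrak{p}$ in $\mathfrak{g}$, so $\{\mathfrak{g}^\alpha : \alpha\in\Phi^c\}$ projects to a basis of $\mathfrak{g}/\mathfrak{p}$. (One must check the Cartan part contributes nothing, which holds because $\mathfrak{h}\subseteq\mathfrak{p}$ for a parabolic.)

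The next step is to understand $\pi(\mathfrak{g}_\mathbb{R})$ inside $\mathfrak{g}/\mathfrak{p}$. Since $\mathfrak{g}_\mathbb{R} = \mathrm{Fix}(\tau)$, a real basis of $\mathfrak{g}_\mathbb{R}$ is obtained from the $\tau$-orbits on root spaces: for each $\tau$-orbit $\{\alpha,\tau\alpha\}$ one gets, roughly, the real and imaginary parts of a root vector (and similarly for fixed roots $\alpha=\tau\alpha$, adjusted by whether $\tau$ acts as $+1$ or $-1$). The key observation is that $\pi(\mathfrak{g}_\mathbb{R})$, viewed as a real subspace, has real dimension equal to the number of roots $\alpha$ such that \emph{at least one} of $\mathfrak{g}^\alpha$, $\mathfrak{g}^{\tau\alpha}$ survives modulo $\mathfrak{p}$ — because $\tau$ pairs up the root spaces and each pair contributes its "complex dimension's worth" to the real span of the fixed-point set, \emph{unless both members of the pair die in the quotient}. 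Concretely: $\pi(\mathfrak{g}_\mathbb{R})$ fails to be all of $\mathfrak{g}/\mathfrak{p}$ exactly along those $\tau$-pairs $\{\alpha,\tau\alpha\}$ with $\alpha\in\Sigma^-$, $\mathfrak{g}^\alpha\not\subseteq\mathfrak{p}$, and also $\tau\alpha\in\Sigma^-$, $\mathfrak{g}^{\tau\alpha}\not\subseteq\mathfrak{p}$ — i.e. $\alpha\in\Phi^c$ and $\tau\alpha\in\Phi^c$. Each such pair (or fixed root, when $\tau\alpha=\alpha$) removes one complex dimension from the image, giving $\mathrm{codim}_Z(D) = |\Phi^c\cap\tau\Phi^c|$. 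The parity is respected throughout since $\tau$ preserves the $\mathbb{Z}/2$-grading, so the identity holds as super vector spaces.

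I would carry this out by working in the complexification: write $\mathfrak{g} = \mathfrak{g}_\mathbb{R}\oplus i\mathfrak{g}_\mathbb{R}$ and note $\pi(\mathfrak{g}) = \mathfrak{g}/\mathfrak{p} = \pi(\mathfrak{g}_\mathbb{R}) + i\,\pi(\mathfrak{g}_\mathbb{R})$; then $\dim_\mathbb{C}(\mathfrak{g}/\mathfrak{p}) - \dim_\mathbb{C}\pi(\mathfrak{g}_\mathbb{R}) = \dim_\mathbb{C}\big(\pi(\mathfrak{g}_\mathbb{R})\cap i\,\pi(\mathfrak{g}_\mathbb{R})\big)$ (as a real equality of dimensions, halved appropriately). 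The subspace $\pi(\mathfrak{g}_\mathbb{R})\cap i\,\pi(\mathfrak{g}_\mathbb{R})$ is the largest $\tau$-complex subspace of the image, and a root-space computation identifies its dimension with the number of $\alpha\in\Phi^c$ for which $\tau\alpha$ again lies in $\Phi^c$. The final sentence of the statement is then immediate: $D$ is open iff $T_{D,z} = T_{Z,z}$ iff the codimension is $0$ iff $\Phi^c\cap\tau\Phi^c = \emptyset$.

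The main obstacle I anticipate is the careful bookkeeping of how $\tau$ acts on root spaces relative to the positive/negative splitting — in particular, $\tau$ need not preserve $\Sigma^+$, so a root $\alpha\in\Phi^c\subseteq\Sigma^-$ may have $\tau\alpha\in\Sigma^+$, and one must check that such $\alpha$ do \emph{not} contribute to the codimension (their $\tau$-partner's root space, being associated to a positive root, may or may not lie in $\mathfrak{p}$, and the claim is that the pair still spans a full complex line in the image). Handling the $\tau$-fixed roots, the interplay with the Cartan, and — crucially in the super setting — making sure the odd root spaces (which can be "isotropic", with $\mathfrak{g}^\alpha$ one-dimensional but behaving differently under traces) are counted with the correct sign/parity, is where the real work lies. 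I expect this to reduce to a finite check against the structure of $\mathfrak{p}$ as described via $\Phi^r$, $\Phi^n$, $\Phi^c$, together with the compatibility of $\tau$ with the root-space decomposition.
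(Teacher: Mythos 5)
Your plan is essentially the argument the paper itself relies on: the paper gives no independent proof but defers to Wolf's Theorem 2.12, whose proof is precisely this identification of $T_{D,z}$ with the image of $\mathfrak{g}_\mathbb{R}$ in $\mathfrak{g}/\mathfrak{p}$ followed by root-space bookkeeping against $\tau$ (equivalently, $\mathrm{codim}_Z(D)=\dim_\mathbb{C}\mathfrak{g}/(\mathfrak{p}+\tau\mathfrak{p})$, using a $\tau$-stable Cartan). The only point to tighten is the dimension count: work with real dimensions throughout --- each root of $\Phi^c\cap\tau\Phi^c$, counted with the multiplicity of its root space, accounts for one real dimension of deficiency --- rather than the ill-defined $\dim_\mathbb{C}$ of the real subspace $\pi(\mathfrak{g}_\mathbb{R})$.
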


This codimension formula is a generalization of Theorem 2.12 in \cite{W} and the proof is completely analogous to the one given there.

Strong measurability is characterized by the following theorem which is analogous to the characterization of measurability 
in the classical setting (see Theorem 6.1.(1a);(2a)-(2d) and 6.7. in \cite{W}).  Weak measurability occurs in some exceptional cases 
where certain symmetries of the root system lead to the cancellation of even and odd roots.

\begin{satz}
Let $G$ be a complex reductive Lie supergroup, $G_\mathbb{R}$ a real form of $G$ and $P$ a parabolic subgroup of $G$
and $D \cong G_\mathbb{R}/L_\mathbb{R}$ an open orbit in $G/P$. Then the following are equivalent.

\begin{enumerate}
 \item $D$ is strongly measurable
 \item $\mathfrak{p} \cap \tau\mathfrak{p}$ is reductive
 \item $\mathfrak{p} \cap \tau\mathfrak{p}$ is the reductive part of $\mathfrak{p}$
 \item $\tau \Phi^r = \Phi^r$ and $\tau \Phi^n = \Phi^c$
 \item $\tau\mathfrak{p}$ is $G_0$-conjugate to $\mathfrak{p}^{op}$.
\end{enumerate}
 
\end{satz}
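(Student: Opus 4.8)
The plan is to reduce everything to condition (4) and to prove $(4)\Leftrightarrow(2)\Leftrightarrow(3)$, $(4)\Leftrightarrow(5)$ and $(4)\Leftrightarrow(1)$. Following \cite{W}, the first step is to fix a $\tau$-stable Cartan subalgebra $\mathfrak{h}\subseteq\mathfrak{p}$ (obtained by complexifying a Cartan subalgebra of the reductive part of $\mathfrak{g}_\mathbb{R}\cap\mathfrak{p}$), so that $\tau$ acts on $\Sigma$ as an involution commuting with $\alpha\mapsto-\alpha$ and preserving parity; write $\Phi^r_0,\Phi^n_0,\Phi^c_0$ and $\Phi^r_1,\Phi^n_1,\Phi^c_1$ for the even, resp.\ odd, roots in $\Phi^r,\Phi^n,\Phi^c$. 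Since $\mathfrak{p}\supseteq\mathfrak{b}$ one has $\Sigma=\Phi^r\sqcup\Phi^n\sqcup\Phi^c$ with $\Phi^c=-\Phi^n$, $\Phi^r=-\Phi^r$, $\{\alpha:\mathfrak{g}^\alpha\subseteq\mathfrak{p}\}=\Phi^r\sqcup\Phi^n$ and $\{\alpha:\mathfrak{g}^\alpha\subseteq\tau\mathfrak{p}\}=\tau\Phi^r\sqcup\tau\Phi^n$. By the standard identity $(\mathfrak{g}_\mathbb{R}\cap\mathfrak{p})_\mathbb{C}=\mathfrak{p}\cap\tau\mathfrak{p}$ the complexified isotropy superalgebra is the standard subalgebra $\mathfrak{h}\oplus\bigoplus_{\alpha\in S}\mathfrak{g}^\alpha$ with $S:=(\Phi^r\sqcup\Phi^n)\cap(\tau\Phi^r\sqcup\tau\Phi^n)$, and by the codimension formula above the openness of $D$ means $\Phi^c\cap\tau\Phi^c=\emptyset$, which forces $|S|=|\Phi^r|$ in each parity.

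Next I would treat $(2)\Leftrightarrow(3)\Leftrightarrow(4)$ by root combinatorics, (2) amounting to $S=-S$. Assume $S=-S$. If some $\alpha\in\Phi^n$ lay in $\tau\Phi^r$, then $\alpha\in S$ while $-\alpha\in\Phi^c$ is not in $S$, a contradiction; as $\tau\Phi^r$ and $\Phi^n\sqcup\Phi^c$ are symmetric this yields $\tau\Phi^r=\Phi^r$, whence $S=\Phi^r\sqcup(\Phi^n\cap\tau\Phi^n)$ (using $\tau\Phi^n\subseteq\Sigma\setminus\Phi^r$), and $S=-S$ together with $\Phi^n\cap\Phi^c=\emptyset$ forces $\Phi^n\cap\tau\Phi^n=\emptyset$, hence $\Phi^n=\tau\Phi^c$, i.e.\ $\tau\Phi^n=\Phi^c$ — this is (4). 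Conversely (4) gives $S=\Phi^r$, so $\mathfrak{p}\cap\tau\mathfrak{p}$ is the reductive part $\mathfrak{l}$ of $\mathfrak{p}$, which is $(4)\Rightarrow(3)$; and $(3)\Rightarrow(2)$ is immediate. For (5): relative to the chosen $\mathfrak{h}$, condition (4) says exactly $\tau\mathfrak{p}=\mathfrak{p}^{op}$, which is in particular $G_0$-conjugate to $\mathfrak{p}^{op}$; conversely, given such a conjugacy one arranges $\tau\mathfrak{p}=\mathfrak{p}^{op}$ on the nose by an appropriate choice of $\mathfrak{h}$, using that $\tau$-stable Cartan subalgebras of $\mathfrak{p}$ are $L_{0\mathbb{R}}$-conjugate exactly as in \cite{W}.

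For $(4)\Rightarrow(1)$: if $\tau\Phi^n=\Phi^c$ then the $\mathfrak{h}$-weights occurring in $\mathfrak{g}/(\mathfrak{p}\cap\tau\mathfrak{p})=\mathfrak{g}/\mathfrak{l}$ form the symmetric set $\Phi^n\sqcup\Phi^c$, so the graded weight sum — the weight of $\Ber(\mathfrak{g}/(\mathfrak{p}\cap\tau\mathfrak{p}))$ — vanishes; by Theorem 4.13 in \cite{AH} (as in the Example above) the isotropy action of $L_{0\mathbb{R}}$ on this Berezinian line is trivial, so $D$ carries a $G_\mathbb{R}$-invariant Berezinian density. Restricting (4) to even roots gives $\tau\Phi^r_0=\Phi^r_0$ and $\tau\Phi^n_0=\Phi^c_0$, so $\mathfrak{p}_0\cap\tau\mathfrak{p}_0$ is reductive and $D_0$ is measurable by \cite{W}; hence $D$ is strongly measurable.

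The remaining, and genuinely delicate, implication is $(1)\Rightarrow(4)$, and I expect this to be the main obstacle. Measurability of $D_0$ gives, via \cite{W}, the even half $\tau\Phi^r_0=\Phi^r_0$, $\tau\Phi^n_0=\Phi^c_0$; in particular the even part of the weight set $\Phi^c\sqcup\tau\Phi^c$ of $\mathfrak{g}/(\mathfrak{p}\cap\tau\mathfrak{p})$ equals the symmetric set $\Phi^c_0\sqcup\Phi^n_0$ and sums to zero. Therefore the vanishing of the Berezinian weight — forced by the existence of an invariant density — collapses to the single odd identity $\sum_{\alpha\in\Phi^c_1\sqcup\tau\Phi^c_1}\alpha=0$. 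The delicate point is to deduce the set equality $\tau\Phi^n_1=\Phi^c_1$ from this numerical cancellation; it is exactly the possibility of even--odd cancellation here that accounts for the weakly (but not strongly) measurable orbits, so the classical argument does not suffice. I would finish by using a grading element $\zeta\in\mathfrak{h}$ of $\mathfrak{p}$, i.e.\ $\alpha(\zeta)>0$ for $\alpha\in\Phi^n$, $\alpha(\zeta)<0$ for $\alpha\in\Phi^c$, $\alpha(\zeta)=0$ for $\alpha\in\Phi^r$, whose existence for Onishchik--Ivanova parabolics of the classical Lie superalgebras is a structural input. Openness of $D$ gives $\tau\Phi^c_1\subseteq\Phi^r_1\sqcup\Phi^n_1$, so write $\tau\Phi^c_1=A\sqcup B$ with $A\subseteq\Phi^n_1$ and $B\subseteq\Phi^r_1$. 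Evaluating the odd identity at $\zeta$ gives $\sum_{\alpha\in A}\alpha(\zeta)=\sum_{\alpha\in\Phi^n_1}\alpha(\zeta)$, since the $\Phi^c_1$-terms sum to $-\sum_{\alpha\in\Phi^n_1}\alpha(\zeta)$ and the $B$-terms vanish; as $A\subseteq\Phi^n_1$ and all these numbers are strictly positive, this forces $A=\Phi^n_1$, and then $|\tau\Phi^c_1|=|\Phi^c_1|=|\Phi^n_1|$ forces $B=\emptyset$. Hence $\tau\Phi^c_1=\Phi^n_1$, i.e.\ $\tau\Phi^n_1=\Phi^c_1$, which with the even half is (4); combining the four equivalences completes the proof. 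The steps needing the most care are this grading-element argument and the verification, through Theorem 4.13 in \cite{AH}, that the problem really reduces to the vanishing of the Berezinian weight.
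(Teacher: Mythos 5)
Your proposal follows essentially the same route as the paper: the equivalence of (2)--(5) via the decomposition of $\mathfrak{p}\cap\tau\mathfrak{p}$ into $\mathfrak{p}^r\cap\tau\mathfrak{p}^r$ plus its nilpotent radical, and the equivalence with (1) via Theorem 4.13 of \cite{AH}, Wolf's classical characterization applied to $D_0$, and a root-positivity argument upgrading the numerical trace-zero condition to the set equality $\tau\Phi^n_1=\Phi^c_1$. Your grading-element computation on the weights of $\mathfrak{g}/(\mathfrak{p}\cap\tau\mathfrak{p})$ is the complement-side version of the paper's decomposition of $\mathfrak{p}_1\cap\tau\mathfrak{p}_1$ into $\mathfrak{p}^r_1\cap\tau\mathfrak{p}^r_1\oplus\mathfrak{p}^r_1\cap\tau\mathfrak{p}^n_1\oplus\mathfrak{p}^n_1\cap\tau\mathfrak{p}^r_1$, and it makes the decisive positivity step more explicit than the paper's own wording.
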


\begin{proof}

Equivalence of 2.-5. follows form the fact that the reductive part of $\mathfrak{p} \cap \tau\mathfrak{p}$ is $\mathfrak{p}^r \cap \tau\mathfrak{p}^r$ and
its nilpotent radical is $\mathfrak{p}^r \cap \tau\mathfrak{p}^n + \mathfrak{p}^n \cap \tau\mathfrak{p}^r$.  It remains to show the equivalence of 1. and 2.

If $D$ is strongly measurable, then the Berezinian bundle $\Ber (G_\mathbb{R}/L_\mathbb{R})$ has a non-zero $G_\mathbb{R}$-invariant global section. 
By virtue of Theorem 4.13 in \cite{AH} this is equivalent to $\Ber((\mathfrak{g}_\mathbb{R}/\mathfrak{l}_\mathbb{R})^*)$ being a trivial $L_\mathbb{R}$-module.
This in turn is equivalent to $\Ber(\mathfrak{g}_\mathbb{R}/\mathfrak{l}_\mathbb{R})$ being a trivial $L_\mathbb{R}$-module.
Consequently the Harish-Chandra pair $\mathfrak{l}_\mathbb{R}$ and its complexification $\mathfrak{p} \cap \tau \mathfrak{p}$
act on $\mathfrak{g}_\mathbb{R}/\mathfrak{l}_\mathbb{R}$ and $\mathfrak{g}/(\mathfrak{p} \cap \tau \mathfrak{p})$ respectively
by operators with vanishing supertrace. As $\mathrm{ad}(\mathfrak{g}_1) \subseteq \mathfrak{gl}(\mathfrak{g})_1$, all of these
operators will have supertrace equal to zero. Hence one only needs to consider the action of $\mathfrak{p}_0 \cap \tau \mathfrak{p}_0$
on $\mathfrak{g}/(\mathfrak{p} \cap \tau \mathfrak{p})$.


By assumption, $D_0$ is measurable and consequently $\mathfrak{p}_0 \cap \tau \mathfrak{p}_0$ is reductive. Therefore  $\mathfrak{p}_0 \cap \tau \mathfrak{p}_0$
acts on $\mathfrak{p}_0 \cap \tau \mathfrak{p}_0$ and on $\mathfrak{p}_1 \cap \tau \mathfrak{p}_1$ with trace zero. 
As in the proof of the analogous theorem
in the classical case, one observes that $\mathfrak{p}_1 \cap \tau \mathfrak{p}_1$ splits into 
$\mathfrak{p}_1^r \cap \tau \mathfrak{p}_1^r \oplus \mathfrak{p}_1^r \cap \mathfrak{p}_1^n  \oplus \mathfrak{p}_1^n \cap \tau \mathfrak{p}_1^r$ 
and the trace zero condition forces $\mathfrak{p}_1^r \cap \mathfrak{p}_1^n  \oplus \mathfrak{p}_1^n \cap \tau \mathfrak{p}_1^r = 0$. 
Consequently, $P \cap \tau P$ is reductive.

Conversely, if $P \cap \tau P$ is reductive, then its real form $L_\mathbb{R}$ is real reductive and therefore acts trivially on $\Ber((\mathfrak{g}_\mathbb{R}/\mathfrak{l}_\mathbb{R})^*)$. 
Then again by virtue of \cite{AH}, there exists a $G_\mathbb{R}$-invariant Berezinian form on $D$. Moreover, $P_0 \cap \tau P_0$ is reductive and therefore $D_0$ is measurable by the classical characterization theorem.

\end{proof}

\begin{rem}

Using the above characterization, non-measurable open orbits can be constructed as follows:
According to \cite{OnI} any parabolic subalgebra $\mathfrak{p}$ of a basic Lie superalgebra $\mathfrak{g}$ is of the form

\[ \mathfrak{p} = \mathfrak{b} \oplus \bigoplus_{ \alpha \in [-J]} \mathfrak{g}^\alpha \]

\noindent where $\mathfrak{b}$ is a Borel subalgebra, $J \subseteq \Pi$ is a set of simple roots and $[-J]$ is the set of all negative roots
which are sums of elements of $-J$. Now, if $\mathfrak{p}$ is given as above, open $G_\mathbb{R}$-orbits in $G/P$ will be measurable
if and only if $\tau(J) = J$. So, in order to construct a non-measurable open orbit, one only needs to pick a subset $J \subseteq \Pi$
such that $\tau(J) \neq J$ and let $\mathfrak{p}$ be given by the above formula.  

\end{rem}

The proof of strong measurability can be simplified by using the following lemma:

\begin{lemma}
 
A flag domain $D$ is strongly measurable if and only if for every $\alpha \in \Sigma: \alpha \in \Phi \Leftrightarrow \tau(-\alpha) \in \Phi$

\end{lemma}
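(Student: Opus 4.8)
The plan is to read the Lemma off from the Theorem characterising strong measurability above, specifically from the equivalence of strong measurability with its condition 4, namely ``$\tau\Phi^r=\Phi^r$ and $\tau\Phi^n=\Phi^c$''; after that the proof contains no representation theory, only elementary manipulations of root sets. Note first that the set-up already presupposes that $\tau$ acts on $\Sigma$ — it does so in condition 4 and in the codimension formula — i.e.\ that the chosen Cartan subalgebra $\mathfrak{h}$ is $\tau$-stable, so that $\tau$ is an involutive permutation of $\Sigma$; I use this throughout. Since $\mathfrak{p}$ contains a Borel subalgebra with positive system $\Sigma^+$, the root system is the disjoint union $\Sigma=\Phi^r\sqcup\Phi^n\sqcup\Phi^c$ with $\Phi^r=-\Phi^r$ and $\Phi^c=-\Phi^n$ (recall $\Phi=\Phi^c$, the set of roots occurring in $\mathfrak{g}/\mathfrak{p}$).

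The heart of the argument is one set identity: for any involutive bijection $\tau$ of $\Sigma$ and any $\Psi\subseteq\Sigma$,
\[
 \{\,\alpha\in\Sigma : \tau(-\alpha)\in\Psi\,\}\;=\;-\tau\Psi ,
\]
with $-\tau\Psi:=\{-\tau(\alpha):\alpha\in\Psi\}$; it follows from the substitution $\beta=-\alpha$ and $\tau^{-1}=\tau$. Applying it with $\Psi=\Phi=\Phi^c$, the condition ``$\alpha\in\Phi\Leftrightarrow\tau(-\alpha)\in\Phi$ for all $\alpha\in\Sigma$'' is precisely the set equality $\Phi^c=-\tau\Phi^c$. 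Applying the negation map to both sides and using $-\Phi^c=\Phi^n$ turns this into $\Phi^n=\tau\Phi^c$, and applying $\tau$ once more into the equivalent form $\tau\Phi^n=\Phi^c$.

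To close the loop with condition 4 it remains to observe that $\tau\Phi^n=\Phi^c$ forces $\tau\Phi^r=\Phi^r$ on its own: applying $\tau$ gives $\tau\Phi^c=\Phi^n$, so $\tau$ carries $\Phi^n\cup\Phi^c$ onto itself, and being a bijection of $\Sigma$ it then carries the complement $\Phi^r$ onto itself. (Incidentally $\tau\Phi^n=\Phi^c$ also gives $\Phi^c\cap\tau\Phi^c=\Phi^c\cap\Phi^n=\emptyset$, so by the codimension formula the root condition already entails that $D$ is open.) Hence the displayed condition is equivalent to ``$\tau\Phi^r=\Phi^r$ and $\tau\Phi^n=\Phi^c$'', that is, to condition 4 of the Theorem, and therefore to strong measurability of $D$. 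There is no real obstacle here; the only point worth isolating is precisely this last observation, which is what allows a single symmetric-looking condition to capture both halves of condition 4.
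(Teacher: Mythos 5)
Your proof is correct, and it reaches the lemma by a mildly different route than the paper. The paper's own argument leans on condition (2) of the preceding Theorem: it translates strong measurability into the statement that the root set $\Phi\cap\tau\Phi$ of $\mathfrak{p}\cap\tau\mathfrak{p}$ is stable under $\alpha\mapsto-\alpha$, and then runs an element-by-element case analysis (splitting on whether $\tau(\alpha)\in\Phi$) that also invokes $\tau\Phi^n=-\Phi^n$ from condition (4). You instead identify the lemma's condition with condition (4) outright by pure set algebra, which eliminates the case analysis and isolates the worthwhile observation that $\tau\Phi^n=\Phi^c$ alone forces $\tau\Phi^r=\Phi^r$, so condition (4) is really a single equation; your parenthetical that the condition already implies openness of $D$ is also correct. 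One small discrepancy: in the paper $\Phi$ denotes the set of roots of $\mathfrak{p}$ (see the displayed equivalence $(y_j-x_i)\in\Phi\Leftrightarrow\exists X\in\mathfrak{p}\colon X(e_i)=f_j$ later in the text), not the set $\Phi^c$ of roots of $\mathfrak{g}/\mathfrak{p}$ as you assume; but since $\alpha\mapsto-\tau(\alpha)$ is a bijection of $\Sigma$, stability of a subset is equivalent to stability of its complement, so the two readings yield logically equivalent conditions and your argument is unaffected. Both your proof and the paper's tacitly use $\Sigma=-\Sigma$ and that $\tau$ permutes $\Sigma$, which fails for the roots $\pm 2x_i$ in type $P(n)$, but the paper handles that case separately in any event.
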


\begin{proof}
 
Let $D$ be strongly measurable. Then $\alpha \in \Phi \cap \tau \Phi$ if and only if $- \alpha \in \Phi \cap \tau \Phi$.
Suppose $\alpha \in \Phi$. If $\tau(\alpha) \in \Phi$, then $-\alpha \in \Phi \cap \tau \Phi$ and therefore $\tau(-\alpha) \in \Phi$.
If $\tau(\alpha) \not\in \Phi$, then $\alpha \in \Phi^n$ and, as $D$ is strongly measurable, $\tau(\Phi^n) = -\Phi^n$, and therefore
$\tau(-\alpha) \in -(-\Phi^n) = \Phi^n$, especially $\tau(-\alpha) \in \Phi$. The same argument applied to $\tau(-\alpha)$ yields the converse.
So if $D$ is strongly measurable, $\alpha \in \Phi$ if and only if $\tau(-\alpha) \in \Phi$.

Now suppose $\alpha \in \Phi \Leftrightarrow \tau(-\alpha) \in \Phi$. If $\alpha \in \tau(\Phi)$, then $\tau(\alpha) \in \Phi$
and therefore $\tau(-\tau(\alpha)) = -\alpha \in \Phi$. As $\tau(-\alpha) \in \Phi$ was given, this implies $-\alpha \in \Phi \cap \tau \Phi$.
If $\alpha \not\in \tau(\Phi)$, then $\tau(\alpha) \not\in \Phi$ and therefore $- \alpha = \tau(-\tau(\alpha)) \not\in \Phi$.
So $\alpha \in \Phi \cap \tau\Phi$ if and only if $-\alpha \in \Phi \cap \tau\Phi$ and therefore $D$ is strongly measurable as claimed.

\end{proof}

Note that a $G_\mathbb{R}$-orbit $D$ is measurable if the super-trace of the natural representation of $\mathfrak{p}$ on $(\mathfrak{g}/\mathfrak{p})^*$
is zero. Therefore strong measurability requires $\alpha \in \Phi \cap \tau \Phi \Leftrightarrow -\alpha \in \Phi \cap \tau \Phi$
while weak measurability requires that the even and odd roots cancel each other. 


\section{Classification of measurable open orbits}

The measurability of open real orbits will be analyzed case by case according to the classification of complex simple Lie superalgebras.
The classification of real forms and part of the notation are taken from \cite{Ser}. The results are summarized in a table at the end of this text.

\subsection{Type A(m,n)}

\begin{satz}
 
Let $\mathfrak{g} = \mathfrak{sl}_{m \vert n}(\mathbb{C})(m \neq n)$ or $\mathfrak{g} = \mathfrak{psl}_{n \vert n}(\mathbb{C})$. 

\begin{enumerate}
 \item If $\mathfrak{g}_\mathbb{R} = \mathfrak{sl}_{m \vert n}(\mathbb{R})$ or $\mathfrak{g}_\mathbb{R} = \mathfrak{sl}_{k \vert l}(\mathbb{H})(n=2k,m=2l)$, then
  \begin{itemize}
   \item A flag domain $D$ is open if and only if $\delta$ is even-symmetrizable.
   \item A flag domain $D$ is strongly measurable if and only if $\delta$ is even-symmetric.
   \item If $m=n$ and $\delta$ is $\Pi$-symmetric then $D$ is weakly measurable.
   \item If $n=m$, the unique open $G_\mathbb{R}$-orbit in $\mathbb{P}(\mathbb{C}^{n \vert n})$ is weakly measurable.
  \end{itemize}
 \item If $\mathfrak{g}_\mathbb{R} = \mathfrak{su}(p, n-p \vert q, m-q)$ then $D$ is always open and measurable.
 \item If $\mathfrak{g}_\mathbb{R} = \ ^0\mathfrak{pq}(n)$ then:
  \begin{itemize}
   \item A flag domain $D$ is open if and only if $\delta$ is odd-symmetrizable.
   \item A flag domain $D$ is strongly measurable if and only if $\delta$ is odd-symmetric.
   \item If $\delta$ is $\Pi$-symmetric then $D$ is weakly measurable.
  \end{itemize}
 \item If $\mathfrak{g}_\mathbb{R} = \mathfrak{us\pi}(n)$, then $D$ is open if and only if $\delta$ is $\Pi$-symmetric. 
       If it is open it is always measurable.
\end{enumerate}

\end{satz}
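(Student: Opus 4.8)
The plan is to treat the four families of real forms of type $A$ one at a time, but by a single method, fixing in each case a $\tau$-stable Cartan subalgebra $\mathfrak{h}$ of $\mathfrak{g}$ contained in $\mathfrak{p}$, with the usual coordinates $x_{1},\dots,x_{m}$ on the even part and $y_{1},\dots,y_{n}$ on the odd part, so that $\Sigma=\{x_{i}-x_{j}\}\cup\{y_{i}-y_{j}\}\cup\{\pm(x_{i}-y_{j})\}$, the first two sets being the even roots. Once the action of $\tau$ on $\mathfrak{h}^{*}$ is known, everything follows from the machinery already established: the codimension formula gives openness (equivalently $\Phi^{c}\cap\tau\Phi^{c}=\emptyset$); the characterization of strong measurability above, together with the Lemma, reduces strong measurability to the condition $\alpha\in\Phi\Leftrightarrow\tau(-\alpha)\in\Phi$ for all $\alpha\in\Sigma$, equivalently $\tau\Phi^{r}=\Phi^{r}$ and $\tau\Phi^{n}=\Phi^{c}$; and weak measurability, in the cases where $D_{0}$ is not measurable, is tested by computing the graded sum over $\mathfrak{h}$ of the roots occurring in $\mathfrak{g}/(\mathfrak{p}\cap\tau\mathfrak{p})$ and checking that it vanishes, exactly as in the Example of Section~2.

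The first and decisive step is therefore to read off the action of $\tau$ on $\mathfrak{h}^{*}$ from the classification of involutions in \cite{Ser} and the matrix models of the real forms in \cite{Par}. I expect the following picture, in the language of the Remark after Definition~2.1: for $\mathfrak{g}_{\mathbb{R}}=\mathfrak{sl}_{m\vert n}(\mathbb{R})$ and $\mathfrak{g}_{\mathbb{R}}=\mathfrak{sl}_{k\vert l}(\mathbb{H})$ the composite $-\tau$ acts within the even and the odd block separately and induces (a conjugate of) the reflection $r_{0}$ of the extended Dynkin diagram; for $\mathfrak{g}_{\mathbb{R}}=\mathfrak{su}(p,n-p\vert q,m-q)$ one has $-\tau=\mathrm{id}$ on $\Sigma$, because a compact-type Cartan can be chosen and then $\tau=-\mathrm{id}$ on every root; for ${}^{0}\mathfrak{pq}(n)$ the composite $-\tau$ interchanges the even and odd blocks and induces $r_{1}$; and for $\mathfrak{us\pi}(n)$ it induces the antipodal map $s$. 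Granting this, $\tau\mathfrak{p}$ is the parabolic attached to the flag obtained from $\delta$ by applying $-\tau$ and taking complements, so $\mathfrak{p}+\tau\mathfrak{p}=\mathfrak{g}$ amounts to saying that the flags of $\mathfrak{p}$ and of $\tau\mathfrak{p}$ admit a common refinement; an elementary dimension count identifies this with the existence of a $(-\tau)$-symmetric enlargement of $\delta$, i.e. ``even-symmetrizable'' in case~(1), no condition in case~(2), ``odd-symmetrizable'' in case~(3), and, in case~(4), ``$\Pi$-symmetric'' — here the would-be symmetrizability condition is forced to hold on the nose, which I would check by the same count. The Lemma then strengthens this: $\tau\Phi^{r}=\Phi^{r}$ and $\tau\Phi^{n}=\Phi^{c}$ hold exactly when $\delta$ itself, not merely some enlargement, is $(-\tau)$-symmetric, yielding the stated strong-measurability criteria; in case~(2) this is automatic since $-\tau=\mathrm{id}$, and in case~(4) the balance imposed by $\Pi$-symmetry makes the even and odd parts of the relevant quotient isomorphic as $\mathfrak{p}_{0}$-modules, so the supertrace vanishes and $D$ is measurable whenever it is open.

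What remains are the genuinely exceptional, weakly measurable, cases: the $\Pi$-symmetric flags in~(1) and~(3) (which force $m=n$), and the open orbit in $\mathbb{P}(\mathbb{C}^{n\vert n})$ in~(1). For these $D_{0}$ is a product of open $SL_{n}(\mathbb{R})$-orbits in projective spaces, non-measurable for $n>2$ precisely as in Section~2, so strong measurability is excluded; but the $\Pi$-symmetry (respectively the balance of the flag $0\vert 0<1\vert 0<n\vert n$ paired against its conjugate) gives an identification of the even with the odd root spaces surviving in $\mathfrak{g}/(\mathfrak{p}\cap\tau\mathfrak{p})$, pairing them off so that their graded sum over $\mathfrak{h}$ cancels; Theorem~4.13 of \cite{AH} then produces the invariant Berezinian, giving weak measurability. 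The step $\mathfrak{g}=\mathfrak{psl}_{n\vert n}$ needs only the additional remark that passing to $\mathfrak{psl}$ removes the centre but leaves the root-space bookkeeping untouched, so the same computations apply verbatim.

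The hard part will be that first step — pinning down $\tau$ on the root system for each real form, which in turn means choosing the Cartan subalgebra correctly: it must be $\tau$-stable, lie in $\mathfrak{p}$, and be fundamental for the isotropy, which is exactly why compact directions appear even when $\mathfrak{g}_{\mathbb{R}}$ is split — together with making the dimension count of the second step rigorous, i.e. proving that transversality of the two flags really is equivalent to symmetrizability of $\delta$, and carefully tracking the slightly different conventions forced by the quaternionic form $\mathfrak{sl}_{k\vert l}(\mathbb{H})$. Once the $\tau$-action is in hand, the four families are dispatched by the uniform argument above, and the results are collected into the table.
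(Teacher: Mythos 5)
Your proposal follows essentially the same route as the paper: fix a $\tau$-stable Cartan (after a Cayley-transform conjugation in the split case, exactly the subtlety you flag), read off the action of $\tau$ on $\Sigma$ for each real form (your identification of $-\tau$ with $r_0$, $\mathrm{id}$, $r_1$, $s$ matches the paper's computations and its Remark on diagram automorphisms), then derive openness from $\Phi^c\cap\tau\Phi^c=\emptyset$ via the transversality/symmetrizability count, strong measurability from the Lemma's criterion $\alpha\in\Phi\Leftrightarrow\tau(-\alpha)\in\Phi$, and weak measurability from cancellation in the graded sum of roots contributing to $\mathfrak{g}/(\mathfrak{p}\cap\tau\mathfrak{p})$, including the explicit check for $\mathbb{P}(\mathbb{C}^{n\vert n})$. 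The details you defer (the inequality manipulations relating $i,j$ to the dimension pairs $\underline{d},\overline{d},\tilde{d}$) are exactly what the paper's propositions supply, so the plan is correct and structurally identical.
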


Note that the second part of this theorem is immediate, as the involution in question is $\tau(\alpha) = -\alpha$.

\begin{rem}
The statements on strong measurability can be proven by analysis of the extended Dynkin diagram, but the point of view we adopted gives more information, in particular it allows to
prove statements on maximal odd dimension and weak measurability as well.
\end{rem}

\subsubsection*{Proof of Theorem}
 
If $\mathfrak{h}$ is a $\tau$-invariant Cartan subalgebra, then there is basis $e_1, \ldots, e_n,f_1, \ldots, f_m$ of $\mathbb{C}^{n \vert m}$, such that
all $e_i,f_j$ are common eigenvectors of $\mathfrak{h}$. Let

\[ x_i (\mathrm{diag}(\lambda_1,\ldots, \lambda_n, \mu_1, \ldots, \mu_n)) = \lambda_i \ \textnormal{and} \ 
y_j (\mathrm{diag}(\lambda_1,\ldots, \lambda_n, \mu_1, \ldots, \mu_n)) = \mu_j \]

\noindent Then $(y_j - x_i) \in \Phi \Leftrightarrow \exists X \in \mathfrak{p}: X(e_i) = f_j$ 
and analogous conditions hold for the even roots.

\subsubsection{The Case $\mathfrak{g}_\mathbb{R} = \mathfrak{sl}_{m \vert n}(\mathbb{R})$}

The open $G_{0\mathbb{R}}$-orbit in $G_0/B_0$, where $B_0 = B_n^+ \times B_m^+$ is the product of the usual Borel subalgebras of
$SL_n(\mathbb{C})$ and $SL_m(\mathbb{C})$, is open and projects onto the $G_{0\mathbb{R}}$-orbit in $Z_0$. So 

\[ (y_j - x_i) \in \Phi \Leftrightarrow \exists X \in \mathfrak{p}: X(e_i) = f_j \Leftrightarrow j \leq \min \{d_1 : d_0 \vert d_1 \in \delta, i \leq d_0\} \]  

\noindent As the intersection $\mathfrak{h} \cap \mathfrak{sl}_{n \vert m}(\mathbb{R})$ is not maximally compact, the open orbit does
not contain the neutral point of $G_0/P_0$. It is therefore useful to pass to the isomorphic real form
$\mathfrak{g}_\mathbb{R}^\prime = g \mathfrak{g}_\mathbb{R} g^{-1}$, where $g = c_1 \ldots c_{\lfloor \frac{n}{2} \rfloor} \tilde{c_1} \ldots \tilde{c}_{\lfloor \frac{m}{2} \rfloor}$ is
the product of commuting Cayley transforms in the subspaces $\langle e_i , e_{n-i+1} \rangle_\mathbb{C}$ and $\langle f_j , f_{m-j+1} \rangle_\mathbb{C}$.

The $\mathbb{C}$-antilinear involution defining $\mathfrak{g}_\mathbb{R}^\prime$ is $\tau: \mathfrak{g} \rightarrow \mathfrak{g}, X \mapsto \mathrm{Ad}(\mathrm{diag}(A_n,A_m))(\bar{X})$,
where $A_n$ and $A_m$ are the respective antidiagonal unit matrices.

Its action on $\Sigma(\mathfrak{g} : \mathfrak{h})$ is given by

\[ \tau(x_j - x_i) = x_{n-j+1} -x_{n-i+1}\]
\[ \tau(y_j - y_i) = y_{m-j+1} -y_{m-i+1}\]
\[ \tau(y_j - x_i) = y_{m-j+1} -x_{n-i+1}\]

\begin{rem}
 
Instead of considering a conjugate real form, one could consider a different base point $z$ in $G/B$, given by
the basis $e_1 + i e_n, e_2 + i e_{n-1} \ldots, e_2 - i e_{n-1}, e_1 - i e_n, f_1 + i f_m, \ldots, f_1 - i f_m$.
Then the stabilizer of $z$ is a parabolic subalgebra $\mathfrak{p}^\prime = g^{-1} \mathfrak{p} g$ of $\mathfrak{g}$
and $\mathfrak{g}_\mathbb{R} \cap g^{-1} \mathfrak{h} g$ is maximally compact and therefore $(G_\mathbb{R} \cdot z)_0$ 
is open. 

\end{rem}

\begin{prop}
 
A flag domain $D$ is open, if and only if $\delta$ is even-symmetrizable.

\end{prop}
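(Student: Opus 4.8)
The plan is to read off the openness of $D$ from the codimension formula of the preceding theorem: $D$ is open iff $\Phi^c\cap\tau\Phi^c=\emptyset$. Choosing the positive system adapted to $\mathfrak{p}$, one has $\Phi^c\cap\tau\Phi^c=\{\alpha\in\Sigma:\mathfrak{g}^\alpha\not\subseteq\mathfrak{p}\ \text{and}\ \mathfrak{g}^\alpha\not\subseteq\tau\mathfrak{p}\}$; equivalently, since $\mathfrak{p}$ and $\tau\mathfrak{p}$ are spanned by $\mathfrak{h}$ together with root spaces, $D$ is open iff $\mathfrak{p}+\tau\mathfrak{p}=\mathfrak{g}$, i.e.\ iff every root space $\mathfrak{g}^\alpha$ lies in $\mathfrak{p}$ or in $\tau\mathfrak{p}$. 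Here $\mathfrak{p}$ stabilizes the standard flag $V_l=\langle e_1,\dots,e_{d_0^l},f_1,\dots,f_{d_1^l}\rangle$ and $\tau\mathfrak{p}$ stabilizes its image $\sigma V_\bullet$ under the antilinear reflection $\sigma\colon e_i\mapsto e_{n+1-i},\ f_j\mapsto f_{m+1-j}$; and $\mathfrak{g}^\alpha\subseteq\mathfrak{p}$ (resp.\ $\subseteq\tau\mathfrak{p}$) iff the matrix unit of weight $\alpha$ preserves $V_\bullet$ (resp.\ $\sigma V_\bullet$). Thus everything reduces to a combinatorial statement about $\delta$.

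First I would dispose of the even roots. For $\alpha=x_i-x_{i'}$, the failure of $\mathfrak{g}^\alpha\subseteq\mathfrak{p}$ forces some $d_0^l$ with $i'\le d_0^l<i$, hence $i>i'$, while the failure of $\mathfrak{g}^\alpha\subseteq\tau\mathfrak{p}$ forces some $d_0^{l'}$ with $i'>n-d_0^{l'}\ge i$, hence $i<i'$; these cannot both hold, and the same applies to the $y$-roots. So no even root lies outside both $\mathfrak{p}$ and $\tau\mathfrak{p}$ — which just records that the body $D_0$, a product of open orbits of the split real forms $\mathfrak{sl}_n(\mathbb{R})$, $\mathfrak{sl}_m(\mathbb{R})$ on flag manifolds, is always open. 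Hence the obstruction to openness is carried entirely by the odd roots.

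The heart of the argument is the odd‑root computation. For $\alpha=y_j-x_i$: "$\mathfrak{g}^\alpha\not\subseteq\mathfrak{p}$" means some $d_0\vert d_1\in\delta$ has $i\le d_0,\ j>d_1$, and "$\mathfrak{g}^\alpha\not\subseteq\tau\mathfrak{p}$" means some $d_0'\vert d_1'\in\delta$ has $i>n-d_0',\ j\le m-d_1'$; such an index pair $(i,j)$ exists iff $d_0+d_0'>n$ and $d_1+d_1'<m$. Symmetrically, for $\alpha=x_i-y_j$ both failures occur simultaneously for some $(i,j)$ iff there are $d_0\vert d_1,\ d_0'\vert d_1'\in\delta$ with $d_0+d_0'<n$ and $d_1+d_1'>m$. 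In either case the inequality pair says exactly that $d_0\vert d_1$ and the even‑mirror $(n-d_0')\vert(m-d_1')$ are incomparable in the product order on $\{0,\dots,n\}\times\{0,\dots,m\}$. Therefore some root space lies outside both $\mathfrak{p}$ and $\tau\mathfrak{p}$ iff some element of $\delta$ is incomparable with the even‑mirror of some element of $\delta$.

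It remains to match this with even‑symmetrizability. The condition that every element of $\delta$ be comparable with the even‑mirror of every element of $\delta$ is precisely the statement that $\tilde\delta:=\delta\cup\{(n-d_0)\vert(m-d_1):d_0\vert d_1\in\delta\}$ is totally ordered. Since $\tilde\delta$ is the smallest even‑symmetric set containing $\delta$, and any even‑symmetric dimension sequence containing $\delta$ must contain $\tilde\delta$, this holds iff $\delta$ extends to an even‑symmetric dimension sequence, i.e.\ iff $\delta$ is even‑symmetrizable. Chaining the equivalences gives $D$ open $\iff$ $\delta$ even‑symmetrizable. The step most prone to slips — and the one I expect to be the real work — is the odd‑root computation: one has to translate the stabilizer conditions for $V_\bullet$ and $\sigma V_\bullet$ into the correct inequalities, keep the two sign types $x_i-y_j$ and $y_j-x_i$ apart (they produce the two ways incomparability arises), and settle once and for all whether the boundary terms $0\vert 0$ and $n\vert m$ belong to $\delta$, as they should for the incomparability bookkeeping to be clean.
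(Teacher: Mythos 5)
Your proposal is correct and follows essentially the same route as the paper: both reduce openness to the emptiness of $\Phi^c\cap\tau\Phi^c$ via the codimension formula, translate the odd-root conditions $\mathfrak{g}^\alpha\not\subseteq\mathfrak{p}$, $\mathfrak{g}^\alpha\not\subseteq\tau\mathfrak{p}$ into the inequalities $n-d_0'<i\le d_0$, $d_1<j\le m-d_1'$, and identify the existence of such a pair with the incomparability of an element of $\delta$ and the even-mirror of an element of $\delta$, i.e.\ with failure of even-symmetrizability. Your explicit disposal of the even roots and the observation that symmetrizability is equivalent to the mirror-closure $\tilde\delta$ being totally ordered make explicit two points the paper leaves implicit, but the substance is identical.
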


\begin{proof}
 
First, suppose $D$ is not open. Then there is an odd root $\alpha \in \Phi^c \cap \tau \Phi^c$. Without loss of generality,
one may assume $\alpha = y_j - x_i$. Let $\overline{d_0} \vert \overline{d_1} = \min\{ d_0 \vert d_1 \in \delta : i \leq d_0 \}$
and $\tilde{d_0} \vert \tilde{d_1} = \min \{ d_0 \vert d_1 \in \delta : n-i+1 \leq d_0 \}$. As $(y_j - x_i) \not\in \Phi$, $j > \overline{d_1}$.
Moreover, as $\tau(y_j - x_i) = (y_{m-j+1} - x_{n-i+1}) \not\in \Phi$, also $m-j+1 > \tilde{d_1} \Leftrightarrow j \leq m-\tilde{d_1}$. 
Finally, as $n-i+1 \leq \tilde{d_0}$ implies $i > n-\tilde{d_0}$, one obtains the following inequalities:

\[ n-\tilde{d_0} < i \leq \overline{d_0}, \quad \overline{d_1} < j \leq m-\tilde{d_0} \]

\noindent So $\overline{d_0} \vert \overline{d_1}$ and $n-\tilde{d_0},m-\tilde{d_1}$ are not comparable and therefore $\delta$ is not symmetrizable.

Conversely, let $d_0 \vert d_1, d_0^\prime \vert d_1^\prime \in \delta$ such that $n-d_0^\prime < d_0$ and $m-d_1^\prime > d_1$.
Then the choice $j = m-d_1^\prime, i = d_0$ satisfies $(y_j - x_i) \in \Phi^c \cap \tau\Phi^c$ and thus $D$ is not open.
 
\end{proof}

\begin{prop}
 
A flag domain $D$ is strongly measurable, if and only if $\delta$ is even-symmetric.

\end{prop}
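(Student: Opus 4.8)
The plan is to read off strong measurability from the Lemma: $D$ is strongly measurable exactly when $\alpha\in\Phi\Leftrightarrow\tau(-\alpha)\in\Phi$ for all $\alpha\in\Sigma$. First I make membership in $\Phi$ explicit at the chosen base point. As recorded before the previous proposition, $(y_j-x_i)\in\Phi\Leftrightarrow j\le\phi(i)$ with $\phi(i):=\min\{d_1:d_0\vert d_1\in\delta,\ i\le d_0\}$; by the same argument $(x_i-y_j)\in\Phi\Leftrightarrow i\le\psi(j)$ with $\psi(j):=\min\{d_0:d_0\vert d_1\in\delta,\ j\le d_1\}$, and an even root $x_b-x_a$ lies in $\Phi$ iff $b\le\min\{d_0:d_0\vert d_1\in\delta,\ a\le d_0\}$ (and similarly for $y_b-y_a$ with the second components). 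Feeding in the formulas for $\tau$ on $\Sigma$ given above, the condition of the Lemma splits into an even part, concerning the roots $x_b-x_a$ and $y_b-y_a$, and an odd part, concerning $\pm(y_j-x_i)$.

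The even part asserts that $\Delta_0:=\{d_0:d_0\vert d_1\in\delta\}$ is invariant under $d\mapsto n-d$ and $\Delta_1:=\{d_1:d_0\vert d_1\in\delta\}$ under $d\mapsto m-d$; this is exactly the classical criterion for the body $D_0$ (a product of two real-form flag domains) to be measurable, and can also be checked directly from the membership formula. The two requirements coming from $\alpha=y_j-x_i$ and $\alpha=x_i-y_j$ are interchanged by the substitution $i\mapsto n-i+1$, $j\mapsto m-j+1$, so the odd part reduces to
\[
j\le\phi(i)\ \Longleftrightarrow\ n-i+1\le\psi(m-j+1)\qquad\text{for all }1\le i\le n,\ 1\le j\le m .
\]

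It then remains to prove that the even part together with the odd part is equivalent to even-symmetry of $\delta$. Write $\sigma(d_0\vert d_1):=(n-d_0)\vert(m-d_1)$, so even-symmetry means $\sigma(\delta)=\delta$. For ``even-symmetric $\Rightarrow$ both parts'': symmetry of $\Delta_0,\Delta_1$ follows by projecting $\sigma(\delta)=\delta$; for the odd part note that $\phi(i)\ge j$ holds iff $\delta$ has no point with first component $\ge i$ and second component $\le j-1$, while $\psi(m-j+1)\ge n-i+1$ holds iff $\delta$ has no point with second component $\ge m-j+1$ and first component $\le n-i$, and $\sigma$ carries one of these forbidden configurations onto the other. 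For the converse one has to reconstruct the point set $\delta$ itself, not merely $\Delta_0$ and $\Delta_1$, from $\phi$ and $\psi$: the points of $\delta$ are the ``corners'' of the staircase described by $\phi$, and the displayed biconditional forces this corner set to be $\sigma$-invariant. I expect this converse to be the main obstacle, the delicate point being the bookkeeping when a first or second coordinate is repeated among the points of $\delta$, which does occur for genuine super flags; describing membership $c_0\vert c_1\in\delta$ through $\phi$ and $\psi$ at mutually compatible arguments and chasing the biconditional should settle it.

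There is also a more direct route that avoids this combinatorics. Rewriting the condition $\alpha\in\Phi\Leftrightarrow\tau(-\alpha)\in\Phi$ as ``the roots of $\mathfrak{p}$ are the negatives of the roots of $\tau\mathfrak{p}$'' identifies it with $\tau\mathfrak{p}=\mathfrak{p}^{op}$; since $\tau$ acts on $\mathbb{C}^{n\vert m}$ by the grading-preserving semilinear map $e_i\mapsto e_{n-i+1}$, $f_j\mapsto f_{m-j+1}$, the flag fixed by $\tau\mathfrak{p}$ and the flag fixed by $\mathfrak{p}^{op}$ can be written out explicitly and coincide precisely when $\delta=\sigma(\delta)$. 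Equivalently, one may invoke part~(5) of the strong-measurability theorem together with the transitivity of $G_0$ on graded flags of a fixed type.
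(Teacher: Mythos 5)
Your primary route follows the same strategy as the paper: start from the Lemma ($D$ is strongly measurable iff $\alpha\in\Phi\Leftrightarrow\tau(-\alpha)\in\Phi$ for all $\alpha$), make membership in $\Phi$ explicit via $\phi$ and $\psi$, and compare with $\sigma(d_0\vert d_1)=(n-d_0)\vert(m-d_1)$. Your forward implication is correct: the two ``forbidden configurations'' characterizing $\phi(i)\ge j$ and $\psi(m-j+1)\ge n-i+1$ are exchanged by $\sigma$, so $\sigma(\delta)=\delta$ forces the biconditional; this is essentially the paper's first half repackaged. The gap is exactly where you flag it: the converse. ``Chasing the biconditional should settle it'' is not an argument. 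What is needed is to start from a pair $d\in\delta$ with $\sigma(d)\notin\delta$ and write down a concrete root $\alpha\in\Phi$ with $\tau(-\alpha)\notin\Phi$ (compare how the openness proposition handles its converse by exhibiting the explicit choice $i=d_0$, $j=m-d_1'$). Until that root is produced, the implication ``strongly measurable $\Rightarrow$ even-symmetric'' is unproved; and you are right that this is the delicate direction, since the paper's own second half runs its computation from a violating root to non-symmetry rather than the other way around.

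Your fallback route, however, does close the argument and is genuinely different from (and cleaner than) the paper's dimension-sequence computation. By the equivalence of (1) and (5) in the characterization theorem, $D$ is strongly measurable iff $\tau\mathfrak{p}$ is $G_0$-conjugate to $\mathfrak{p}^{op}$. Conjugacy classes of parabolics correspond bijectively to flag types (\cite{OnI}); here $\tau$ is induced by the grading-preserving semilinear bijection $e_i\mapsto e_{n-i+1}$, $f_j\mapsto f_{m-j+1}$, so $\tau\mathfrak{p}$ has type $\delta$, while $\mathfrak{p}^{op}$ has type $\sigma(\delta)$; hence the conjugacy holds iff $\delta=\sigma(\delta)$, i.e.\ iff $\delta$ is even-symmetric. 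This gives both directions at once and avoids the staircase bookkeeping entirely, at the cost of leaning on the characterization theorem and the type classification of parabolics, where the paper's proof is self-contained at the level of $\delta$. Either complete the combinatorial converse or promote the type-theoretic argument to the main proof.
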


\begin{proof}
 
First assume $\delta$ is symmetric and let $\alpha = (y_j - x_i) \in \Phi$. We need to show $\tau(-\alpha) = (x_{n-i+1} - y_{m-j+1}) \in \Phi$.
Let $\underline{d_0} \vert \underline{d_1} = \max \{ d_0 \vert d_1 \in \delta: i > d_0 \}$ and $\overline{d_0} \vert \overline{d_1} = \min \{ d_0 \vert d_1 \in \delta: i \leq d_0 \}$.
Furthermore, let $\tilde{d_0} \vert \tilde{d_1} = \min \{ d_0 \vert d_1 \in \delta: m-j+1 \leq d_1\}$. Then $(x_{n-i+1} - y_{m-j+1}) \in \Phi$,
if and only if $n - i + 1 \leq \tilde{d_0}$. 

Now $m-j+1 > m - \overline{d_1}$ and by symmetry, $n - \underline{d_0} \vert m - \underline{d_1}$ is the succesor of $n - \overline{d_0} \vert m - \overline{d_1}$ in $\delta$,
so $\tilde{d_0} \geq n - \underline{d_0}$ and as $i > \underline{d_0}$, this implies $n - i + 1 \leq n - \underline{d_0} \leq \tilde{d_0}$, so $\tau(-\alpha) \in \Phi$.

Now assume $D$ is not strongly measurable, so there exists $\alpha = y_j - x_i \in \Phi$, such that $\tau(-\alpha) = x_{n-i+1} - y_{m-j+1} \not\in \Phi$.
Let $\underline{d_0} \vert \underline{d_1}, \overline{d_0} \vert \overline{d_1}$ and $\tilde{d_0} \vert \tilde{d_1}$ as before.
As $\tau(-\alpha) \not\in \Phi$, $n - i + 1 > \tilde{d_0}$. But $i > \underline{d_0}$, so $n- \overline{d_0} < n-i+1 \leq n-\underline{d_0}$. Therefore $\tilde{d_0} < n - \underline{d_0}$. 
Also $\tilde{d_1} \geq m-j+1 > m-j \geq m-\overline{d_1}$. Altogether, this yields

\[ n-\overline{d_0} \vert m-\overline{d_1} < \tilde{d_0} \vert \tilde{d_1} < n-\underline{d_0} \vert m-\underline{d_1} \]

\noindent And, as $n-\tilde{d_0} \vert m - \tilde{d_1} \not\in \delta$, $\delta$ is not symmetric. The proof proceeds analagously for even roots.

\end{proof}

\begin{prop}
 
If $n=m$ and $\delta$ is $Pi$-symmetric then $D$ is weakly measurable.

\end{prop}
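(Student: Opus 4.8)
The plan is to establish separately the two requirements in the definition of weak measurability: that $D$ carries a $G_\mathbb{R}$-invariant Berezinian density, and that $D_{0}$ is not measurable; the first is the essential point. Following the proof of the preceding characterisation of strong measurability, by Theorem~4.13 of \cite{AH} an invariant Berezinian density on $D\cong G_\mathbb{R}/L_\mathbb{R}$ (with $\mathfrak{l}=\mathfrak{p}\cap\tau\mathfrak{p}$) exists iff $L_\mathbb{R}$ acts trivially on $\Ber((\mathfrak{g}_\mathbb{R}/\mathfrak{l}_\mathbb{R})^{*})$, and infinitesimally this means $\mathfrak{l}$ acts on $\mathfrak{g}/\mathfrak{l}$ with vanishing supertrace. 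Since $\mathfrak{l}_{1}$ and every element of the nilradical of $\mathfrak{l}_{0}$ act with supertrace $0$ automatically, it suffices that $\mathfrak{h}\subseteq\mathfrak{l}_{0}$ act on $\mathfrak{g}/\mathfrak{l}$ with supertrace $0$, i.e. that the graded sum of the $\mathfrak{h}$-weights of $\mathfrak{g}/\mathfrak{l}$ vanish in $\mathfrak{h}^{*}$. As $\delta$ is $\Pi$-symmetric it is even-symmetrizable, so $D$ is open and, by the codimension formula, $\Phi^{c}\cap\tau\Phi^{c}=\varnothing$; hence those weights form the disjoint union $\Phi^{c}\sqcup\tau\Phi^{c}$, and the quantity to be killed is $v+\tau v$ with $v=\sum_{\alpha\in\Phi^{c}}(-1)^{|\alpha|}\alpha$.

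The heart of the argument is that already $v=0$. Since every entry of $\delta$ is of the form $d\vert d$, the parabolic $\mathfrak{p}$ is (conjugate to) the stabiliser of a balanced coordinate flag, i.e. the set of matrices all four of whose $n\times n$ blocks (even--even, even--odd, odd--even, odd--odd) are block-upper-triangular for the partition of $\{1,\dots,n\}$ determined by $\delta$. Writing $\mathrm{bl}(i)$ for the block of $i$, this gives
\[ \Phi^{c}=\{\,x_{i}-x_{j},\ y_{i}-y_{j},\ x_{i}-y_{j},\ y_{i}-x_{j}\ :\ \mathrm{bl}(i)>\mathrm{bl}(j)\,\}, \]
the same index condition governing all four families. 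Grouping the graded sum by index pairs,
\[ v=\sum_{\mathrm{bl}(i)>\mathrm{bl}(j)}\bigl[(x_{i}-x_{j})+(y_{i}-y_{j})-(x_{i}-y_{j})-(y_{i}-x_{j})\bigr]=0. \]
(Equivalently: the balanced flag is invariant under the parity-reversing map $\Pi\colon e_{i}\leftrightarrow f_{i}$, so $\Pi$ normalises $\mathfrak{p}$ and interchanges the even and odd $\mathfrak{h}$-weights of $\mathfrak{g}/\mathfrak{p}$ while fixing their common sum.) Since $\tau$ preserves parity and is additive on $\mathfrak{h}^{*}$, $\tau v=\tau\!\left(\sum_{\alpha\in\Phi^{c}}(-1)^{|\alpha|}\alpha\right)=\tau(0)=0$, so $v+\tau v=0$ and the invariant Berezinian density exists.

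For the failure of measurability of $D_{0}$: its body is the product of two copies of the flag manifold of type $\delta_{0}$ with $G_{0\mathbb{R}}\cong SL_{n}(\mathbb{R})\times SL_{n}(\mathbb{R})$ acting factorwise, and $D_{0}$ is the product of two copies of the open $SL_{n}(\mathbb{R})$-orbit there. By Wolf's classical characterisation (\cite{W}), this orbit---and hence $D_{0}$---is measurable exactly when the corresponding parabolic of $SL_{n}$ is conjugate to its opposite, i.e. when $\delta_{0}$ is palindromic, which, $\delta$ being $\Pi$-symmetric, is precisely the even-symmetry of $\delta$. Thus if $\delta$ is $\Pi$-symmetric but not even-symmetric then $D_{0}$ is not measurable and $D$ is weakly measurable; if $\delta$ is in addition even-symmetric the preceding proposition on strong measurability identifies $D$ as strongly measurable. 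In either case $D$ is measurable, as asserted.

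The one genuinely delicate point is not the cancellation above---immediate once the balanced form of $\mathfrak{p}$ is recorded---but the standard passage from the infinitesimal supertrace condition to triviality of the action of the (possibly disconnected) real isotropy group on the Berezinian line; this is handled exactly as in the proof of the strong-measurability theorem, the nilpotent and odd directions being inert and the reductive even part reducing to Wolf's classical computation. A minor bookkeeping point is that the base point making $D_{0}$ open is reached only after the Cayley transform, so one must use the conjugated involution $\tau(X)=\mathrm{Ad}(\mathrm{diag}(A_{n},A_{n}))(\bar X)$; but since $v$ vanishes before $\tau$ is applied, the outcome is unaffected.
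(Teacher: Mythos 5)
Your proof is correct and follows essentially the same route as the paper: both reduce the existence of the invariant Berezinian to the vanishing of the graded sum of the roots contributing to $\mathfrak{g}/(\mathfrak{p}\cap\tau\mathfrak{p})$ and obtain it from the quadruple cancellation $(x_i-x_j)+(y_i-y_j)-(x_i-y_j)-(y_i-x_j)=0$ forced by $\Pi$-symmetry (you organize this as $v=0$, hence $\tau v=0$, while the paper groups $\Sigma\setminus(\Phi\cap\tau\Phi)$ directly into such quadruples). You are in fact more careful than the paper about the second clause of the definition of weak measurability, correctly observing that when $\delta$ is in addition even-symmetric the orbit is strongly rather than weakly measurable, so that the uniform conclusion is measurability.
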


\begin{proof}
 
Let $\alpha = (x_j - x_i) \in \Phi \cap \tau\Phi$, so $\tau(\alpha) = x_{n-j+1} - x_{n-i+1} \in \Phi$. $\Pi$-symmetry 
is equivalent to the fact, that if one of $x_j - x_i, y_j - y_i, x_j - y_i$ and $y_j - x_i$ is in $\Phi$, then so are the other three.
The same fact applied to $\tau(\alpha)$ yields, that they are all in $\Phi \cap \tau\Phi$(Note that this would fail, if $n \neq m$).
Conversely, if $\alpha$ is not in $\Phi \cap \tau\Phi$, then so neither will be the other three. Now, when computing the supertrace of the action of $\mathfrak{p}$ on $(\mathfrak{g}/\mathfrak{p})^*$, the contributions of these four
roots will add up to zero. As $\alpha$ was arbitrary, the supertrace will be zero and therefore $D$ is weakly measurable.

\end{proof}

\begin{prop}
 
Let $G_\mathbb{R} = PSL_{n \vert n}(\mathbb{R})$ or $G_\mathbb{R} = PSL_{k \vert k}(\mathbb{H})$ 
and $Z = \mathbb{P}(\mathbb{C}^{n \vert n})$. Then the open $G_\mathbb{R}$-orbits in $Z$ are weakly measurable.

\end{prop}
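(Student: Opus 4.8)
The plan is to check separately the two requirements in the definition of weak measurability: that the body orbit $D_0$ fails to be measurable, and that $D$ nonetheless carries a $G_\mathbb R$-invariant Berezinian density. Set $\mathfrak g=\mathfrak{sl}_{n\vert n}(\mathbb C)$ with its diagonal Cartan subalgebra $\mathfrak h$ (the case $\mathfrak g=\mathfrak{psl}_{n\vert n}(\mathbb C)$ follows verbatim after quotienting by $\mathbb C\cdot I$, on which the supertrace functional still vanishes), and let $\mathfrak p$ be the stabiliser of the coordinate line $\langle e_1\rangle$, so that $Z=\mathbb P(\mathbb C^{n\vert n})=G/P$ corresponds to $\delta=0\vert0<1\vert0<n\vert n$. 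As in the discussion above I choose the real structure $\tau$ — for the real form $\tau X=\mathrm{Ad}(\mathrm{diag}(A_n,A_n))\bar X$, and its quaternionic analogue — so that the open orbit $D$ contains the base point, and I write $\mathfrak l=\mathfrak p\cap\tau\mathfrak p$. First I would dispose of the small cases: for $n\le 2$ the flag type $\delta$ is not even-symmetrizable, so by the characterisation of openness proved above $\mathbb P(\mathbb C^{n\vert n})$ has no open orbit and the statement is vacuous; thus we may assume $n\ge 3$ from now on.

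For the first requirement I would appeal to the classical theory. The body orbit $D_0$ is the open orbit of $SL_n(\mathbb R)$ — respectively of $SU^*(n)$, where $n=2k$ — on $\mathbb P^{n-1}(\mathbb C)$, and this open orbit is unique ($\mathbb P^{n-1}(\mathbb R)$ has real codimension $\ge 2$ in the first case, and the body action is transitive in the second), so ``the open orbits'' in the statement is really one orbit. By Wolf's characterisation \cite{W} it is measurable if and only if $\mathfrak p_0\cap\tau\mathfrak p_0$ is reductive, equivalently $\tau\Phi_0^n=\Phi_0^c$. In the real case $\Phi_0^n=\{x_1-x_i:2\le i\le n\}$ and $\tau(x_1-x_i)=x_n-x_{n-i+1}$, so $\tau\Phi_0^n=\{x_n-x_j:1\le j\le n-1\}$, which differs from $\Phi_0^c=\{x_i-x_1:2\le i\le n\}$ as soon as $n\ge 3$ (e.g.\ $x_n-x_2$ belongs to the former but not the latter); hence $D_0$ is not measurable — this is the non-measurable flag domain already recalled in the Example. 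The quaternionic case is handled by the same kind of root computation with the involution attached to $\mathfrak{sl}_k(\mathbb H)$, and I expect that case-checking — confirming that $\mathfrak p_0\cap\tau\mathfrak p_0$ is genuinely non-reductive once $k\ge 2$ — to be the one step that needs real care.

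For the second requirement I would compute the isotropy character on the Berezinian directly. Since $\mathfrak p$ is the stabiliser of $\langle e_1\rangle$, the quotient $\mathfrak g/\mathfrak p$ is spanned by the root vectors of weights $x_i-x_1$ ($2\le i\le n$, even) and $y_i-x_1$ ($1\le i\le n$, odd), so the $\mathfrak h$-supertrace on $\mathfrak g/\mathfrak p$ is
\[
\sum_{i=2}^{n}(x_i-x_1)\;-\;\sum_{i=1}^{n}(y_i-x_1)\;=\;\sum_{i=1}^{n}x_i\;-\;\sum_{i=1}^{n}y_i ,
\]
which is precisely the supertrace functional on $\mathfrak h$ and therefore vanishes identically on $\mathfrak{sl}_{n\vert n}$ — this is the single point where the equality $m=n$ enters. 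Because supertraces of nilpotent operators vanish, every element of $\mathfrak p$, and in particular $\mathfrak l_0=\mathfrak p_0\cap\tau\mathfrak p_0$, acts on $\mathfrak g/\mathfrak p$ with supertrace $0$; and, $\tau\mathfrak p$ being again a line-stabiliser in $\mathbb P(\mathbb C^{n\vert n})$, the same computation shows $\mathfrak l_0$ acts on $\mathfrak g/\tau\mathfrak p$ with supertrace $0$. Since $D$ is open, $\mathfrak p+\tau\mathfrak p=\mathfrak g$, so the natural map $\mathfrak g/\mathfrak l\to\mathfrak g/\mathfrak p\oplus\mathfrak g/\tau\mathfrak p$ is an isomorphism of $\mathfrak l_0$-modules (injective with zero kernel, hence bijective by a graded-dimension count). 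Consequently $\mathfrak l_0$ acts with supertrace $0$ on $\mathfrak g/\mathfrak l$, i.e.\ $\mathfrak l$ acts trivially on $\Ber(\mathfrak g_\mathbb R/\mathfrak l_\mathbb R)$, and by Theorem~4.13 of \cite{AH} — exactly as in the proof of the characterisation of strong measurability above — $D$ carries a $G_\mathbb R$-invariant Berezinian density. Together with the first requirement this shows $D$ is weakly measurable, as claimed.
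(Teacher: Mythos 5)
Your argument for the existence of the invariant Berezinian is essentially the paper's own. The paper lists the roots contributing to $\mathfrak{g}/(\mathfrak{p}\cap\tau\mathfrak{p})$ --- which, since $\mathfrak{p}$ and $\tau\mathfrak{p}$ are the stabilisers of $\langle e_1\rangle$ and $\langle e_n\rangle$ for a $\tau$-generic basis, is exactly the disjoint union of the roots of $\mathfrak{g}/\mathfrak{p}$ and of $\mathfrak{g}/\tau\mathfrak{p}$ --- and computes their graded sum to be $2\sum x_i-2\sum y_i=2\,\mathrm{str}=0$. Your decomposition $\mathfrak{g}/\mathfrak{l}\cong\mathfrak{g}/\mathfrak{p}\oplus\mathfrak{g}/\tau\mathfrak{p}$ merely repackages that computation as the sum of two copies of the supertrace functional. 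The additional care you take in checking that $D_0$ is \emph{not} measurable is worthwhile, since the paper's definition of weak measurability requires it and the paper's proof leaves it implicit.

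There is, however, one concrete error: your disposal of $n=2$. The sequence $\delta=0\vert 0<1\vert 0<2\vert 2$ \emph{is} even-symmetrizable --- $0\vert 0<1\vert 0<1\vert 2<2\vert 2$ is an even-symmetric enlargement, since $1\vert 0$ and $1\vert 2$ are comparable --- so by the openness criterion the orbit in $\mathbb{P}(\mathbb{C}^{2\vert 2})$ is open and the statement is not vacuous for $n=2$. The real obstruction at $n=2$ is precisely the one your first paragraph tests for: $D_0$ is then the upper half-plane in $\mathbb{P}^1(\mathbb{C})$ (resp.\ all of $\mathbb{P}^1(\mathbb{C})$ for $\mathfrak{sl}_1(\mathbb{H})$), with compact isotropy, so $D_0$ \emph{is} measurable and the orbit cannot be weakly measurable in the sense of the paper's definition. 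In other words $n=2$ is a genuine exception rather than a vacuous case; the proposition, like the Example in Section 2.2 which explicitly assumes $n>2$, should be read with $n\geq 3$, and under that hypothesis your proof is correct.
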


\begin{proof}
 
Suppose we are given a $\tau$-generic basis of $\mathbb{C}^n$. Then $\mathfrak{p}$ is the stabiliser of $e_1$ and $\tau \mathfrak{p}$ is
the stabiliser of $e_n$. Therefore

\[ \Sigma \setminus (\Phi \cap \tau\Phi) = \] 
\[ \{ x_j - x_1 : j > 1 \} \cup \{ x_j - x_n : j < n \} \cup \{ y_j - x_1 : 1 \leq j \leq n \} \cup \{ y_j - x_n : 1 \leq j \leq n \} \]

\noindent The graded sum of these roots is

\[ \sum_{j=2}^n (x_j - x_1) + \sum_{j=1}^{n-1} (x_j - x_n) - \sum_{j=1}^n (y_j - x_1) - \sum_{j=1}^n (y_j - x_n) \]

\[ = 2 \sum_{j=2}^{n-1} x_j - (n-2)(x_1 + x_n) - (2 \sum_{j=1}^n y_j - n(x_1 + x_n))\] 

\[ = 2 \sum_{j=1}^n x_j - 2 \sum_{j=1}^n y_j = 2 \mathrm{str} = 0    \]

\noindent Consequently $D$ is weakly measurable.

\end{proof}

\subsubsection{The Case $\mathfrak{g}_\mathbb{R} = \ ^0\mathfrak{pq}(n)$}

In this case $n =m$ and the defining involution of $\mathfrak{g}_\mathbb{R}$ is 

\[ \tau \begin{pmatrix} A & B \\ C & D \end{pmatrix} = \begin{pmatrix} \bar{D} & \bar{C} \\ \bar{B} & \bar{A} \end{pmatrix} \]

\noindent and the action of $ \tau $ on $\Sigma(\mathfrak{g} : \mathfrak{h})$ is given by

\[ \tau(x_j - x_i) = y_j - x_i \]
\[ \tau(y_j - x_i) = x_j - y_i \]

\noindent Moreover the $G_{0\mathbb{R}}$-orbit through the neutral point in $G_0/B_0$  is open, where $B_0 = B_n^+ \times B_n^-$ is the product of the usual Borel subalgebra of
$SL_n(\mathbb{C})$ and its opposite. It projects onto the open $G_{0\mathbb{R}}$-orbit in $Z_0$. So 

\[ (y_j - x_i) \in \Phi \Leftrightarrow \exists X \in \mathfrak{p}: X(e_i) = f_j \Leftrightarrow n-j+1 \leq \min \{d_1 : d_0 \vert d_1 \in \delta, i \leq d_0\} \]  

\begin{prop}
 
A flag domain $D$ is open if and only if $\delta$ is odd-symmetrizable.

\end{prop}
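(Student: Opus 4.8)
The plan is to feed the codimension formula of Section~3, together with the explicit description of $\Phi$ at the chosen base point, into a short combinatorial analysis of dimension sequences. First I would note that, by the codimension formula, $D$ is open if and only if $\Phi^c\cap\tau\Phi^c=\emptyset$. Since $\mathfrak p\supseteq\mathfrak b$ we have $\Sigma^+\subseteq\Phi$, so $\Phi^c=\{\alpha\in\Sigma:\mathfrak g^\alpha\not\subseteq\mathfrak p\}=\Sigma\setminus\Phi$, hence $\Phi^c\cap\tau\Phi^c=\Sigma\setminus(\Phi\cup\tau\Phi)$ and $D$ is open iff every root lies in $\Phi$ or in $\tau\Phi$. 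As $D_0$ is open (arranged above), no even root lies in $\Phi^c\cap\tau\Phi^c$; and since $\tau(y_j-x_i)=x_j-y_i$ and the roots $x_i-y_j$ reproduce, after relabelling $i\leftrightarrow j$, the same conditions as the roots $y_j-x_i$, it remains to show: $D$ is open iff for all $i,j\in\{1,\dots,n\}$ one has $y_j-x_i\in\Phi$ or $x_j-y_i\in\Phi$.

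Next I would convert this into inequalities. At the base point $V_l$ has $e$-part $\langle e_1,\dots,e_{d_0^l}\rangle$ and $f$-part $\langle f_{n-d_1^l+1},\dots,f_n\rangle$, so (as in the displayed criterion) $y_j-x_i\in\Phi$ iff $n-j+1\le d_1^l$ whenever $i\le d_0^l$, and, symmetrically, $x_j-y_i\in\Phi$ iff $j\le d_0^l$ whenever $n-i+1\le d_1^l$. Negating both statements and recording the responsible flag subspace $V_l$, $D$ fails to be open exactly when there are indices $i,j$ and pairs $a\vert b,\ a'\vert b'\in\delta$ with $a\ge i$, $b\le n-j$, $b'\ge n-i+1$ and $a'\le j-1$. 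Since for fixed $a,b,a',b'$ such $i$ and $j$ exist precisely when the relevant integer intervals are nonempty, this reduces to: there exist $a\vert b,\ a'\vert b'\in\delta$ with $a+b'>n$ and $a'+b<n$.

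Finally I would match this with non-odd-symmetrizability. Writing the odd reflection $r$ as $r(c_0\vert c_1)=(n-c_1)\vert(n-c_0)$, a short case check shows that ``$a+b'>n$ and $a'+b<n$'' says exactly that $a\vert b$ and $r(a'\vert b')$ are incomparable in the product order. Conversely, since $\delta$ and $r(\delta)$ are each chains, any incomparable pair contained in $\delta\cup r(\delta)$ must consist of one element of $\delta$ and one reflection of an element of $\delta$, and --- possibly after interchanging the two pairs --- yields such $a\vert b,\ a'\vert b'\in\delta$. As $\delta$ is odd-symmetrizable precisely when $\delta\cup r(\delta)$ (the smallest $r$-closed superset of $\delta$, which contains $0\vert 0$ and $n\vert n$ whenever $\delta$ does) is totally ordered, combining the three steps gives the claim: $D$ is open iff $\delta$ is odd-symmetrizable.

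The step I expect to be the real obstacle is the middle one: keeping the dictionary between ``some operator of $\mathfrak p$ carries one basis vector to another'' and the inequalities among the $d_0^l\vert d_1^l$ under tight control, in particular treating the reversed ($B_n^-$) flag on the odd side with the correct indices, and making sure that testing only the two conjugate families $y_j-x_i$ and $x_i-y_j$ of odd roots really suffices, so that the even roots and the endpoint pairs $0\vert 0$, $n\vert n$ produce no obstruction.
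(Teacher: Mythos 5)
Your proposal is correct and follows essentially the same route as the paper: reduce openness to the vanishing of the odd part of $\Phi^c\cap\tau\Phi^c$ via the codimension formula, translate membership of $y_j-x_i$ and $x_j-y_i$ in $\Phi$ into inequalities on the entries of $\delta$ at the $B_n^+\times B_n^-$ base point, and identify the obstruction with an incomparable pair $d_0\vert d_1$, $(n-d_1')\vert(n-d_0')$, i.e.\ with failure of odd-symmetrizability. The only cosmetic difference is that the paper takes the extremal (minimal) witnesses $\overline{d}$, $\tilde{d}$ where you allow arbitrary ones, which changes nothing.
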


\begin{proof}
 
First suppose $D$ is not open. Then there is an odd root $\alpha \in \Phi^c \cap \tau \Phi^c$. Without loss of generality,
one may assume $\alpha = y_j - x_i$. Let $\overline{d_0} \vert \overline{d_1} = \min\{ d_0 \vert d_1 \in \delta : i \leq d_0 \}$
and $\tilde{d_0} \vert \tilde{d_1} = \min \{ d_0 \vert d_1 \in \delta : n-i+1 \leq d_1 \}$. As $(y_j - x_i) \not\in \Phi$, $n-j+1 > \overline{d_1}$.
Moreover as $\tau(y_j - x_i) = (x_j - y_i) \not\in \Phi$, also $j > \tilde{d_0}$. 
Finally as $n-i+1 \leq \tilde{d_1}$ implies $i > n-\tilde{d_1}$, one obtains the following inequalities:

\[ n-\tilde{d_1} < i \leq \overline{d_0}, \quad \tilde{d_0} < j \leq n-\overline{d_1} \]

\noindent So $\overline{d_0} \vert \overline{d_1}$ and $\tilde{d_0} \vert \tilde{d_1}$ are not comparable and therefore $\delta$ is not symmetrizable.

Conversely, let $d_0 \vert d_1, d_0^\prime \vert d_1^\prime \in \delta$ such that $n-d_1^\prime < d_0$ and $n-d_1 > d_0^\prime$.
Then the choice $j = n - d_1, i = d_0^\prime$ satisfies $(y_j - x_i) \in \Phi^c \cap \tau\Phi^c$ and thus $D$ is not open.
 
\end{proof}

\begin{prop}
 
A flag domain $D$ is strongly measurable, if and only if $\delta$ is odd-symmetric.

\end{prop}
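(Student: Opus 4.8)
The plan is to follow the pattern of the two preceding propositions: by the Lemma characterizing strong measurability, $D$ is strongly measurable if and only if $\alpha\in\Phi\Leftrightarrow\tau(-\alpha)\in\Phi$ for every $\alpha\in\Sigma$, and this is to be checked against the descriptions of $\Phi$-membership fixed at the start of this subsection. Since $\tau$ interchanges the $x$- and $y$-coordinates of $\Sigma(\mathfrak{g},\mathfrak{h})$ (and is $\mathbb{R}$-linear on their real span), one computes $\tau(-(y_j-x_i))=y_i-x_j$, $\tau(-(x_i-y_j))=x_j-y_i$ and $\tau(-(x_i-x_j))=y_j-y_i$, so the Lemma's condition unfolds into one requirement per root family; these are handled by parallel arguments, and I display the one for the odd roots $y_j-x_i$, namely that $y_j-x_i\in\Phi\Leftrightarrow y_i-x_j\in\Phi$ for all $i,j$ — which, via $y_j-x_i\in\Phi\Leftrightarrow n-j+1\leq\min\{d_1:d_0\vert d_1\in\delta,\ i\leq d_0\}$ (and the convention $0\vert0,\ n\vert n\in\delta$), is a statement about $\delta$ alone. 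Conceptually it says that the up-set $\{(i,j):y_j-x_i\in\Phi\}$ is transposition-invariant, and transposition is exactly the substitution $d_0\vert d_1\mapsto(n-d_1)\vert(n-d_0)$ at the level of $\delta$; so it holds precisely when $\delta$ is odd-symmetric.

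For the direction in which $\delta$ odd-symmetric implies $D$ strongly measurable, take $\alpha=y_j-x_i\in\Phi$ and introduce — exactly as in the previous proof — the consecutive pairs $\underline{d_0}\vert\underline{d_1}=\max\{d_0\vert d_1\in\delta:d_0<i\}$ and $\overline{d_0}\vert\overline{d_1}=\min\{d_0\vert d_1\in\delta:i\leq d_0\}$, together with $\tilde{d_0}\vert\tilde{d_1}=\min\{d_0\vert d_1\in\delta:j\leq d_0\}$, which governs membership of $y_i-x_j$. Odd-symmetry makes $d_0\vert d_1\mapsto(n-d_1)\vert(n-d_0)$ an order-reversing involution of $\delta$, so $(n-\overline{d_1})\vert(n-\overline{d_0})$ and $(n-\underline{d_1})\vert(n-\underline{d_0})$ are again consecutive in $\delta$; and $\alpha\in\Phi$ forces $j>n-\overline{d_1}$, so the first of these is excluded from the set over which $\tilde{d_0}\vert\tilde{d_1}$ is the minimum. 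As $\delta$ is a chain this gives $\tilde{d_0}\vert\tilde{d_1}\geq(n-\underline{d_1})\vert(n-\underline{d_0})$, hence $\tilde{d_1}\geq n-\underline{d_0}\geq n-i+1$, i.e.\ $y_i-x_j\in\Phi$. For the converse I argue by contraposition: if strong measurability fails there is (one may reduce to) a root $\alpha=y_j-x_i$ with $\alpha\in\Phi$ but $\tau(-\alpha)=y_i-x_j\notin\Phi$; with $\underline d,\overline d,\tilde d$ as above, $\alpha\in\Phi$ gives $\tilde{d_0}>n-\overline{d_1}$, while $\tau(-\alpha)\notin\Phi$ together with $i>\underline{d_0}$ gives $\tilde{d_1}<n-\underline{d_0}$. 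If $\delta$ were odd-symmetric then $(n-\overline{d_1})\vert(n-\overline{d_0})$ and $(n-\underline{d_1})\vert(n-\underline{d_0})$ would be consecutive elements of the chain $\delta$, but the two inequalities just recorded — via the fact that any two elements of $\delta$ are comparable — would place $\tilde{d_0}\vert\tilde{d_1}\in\delta$ strictly between them, a contradiction; hence $\delta$ is not odd-symmetric.

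The step I expect to be the real work is the index bookkeeping rather than anything conceptual: the reversals $n-j+1$, $n-i+1$ in the membership conditions arise because the open orbit lies over the big cell of $B_n^+\times B_n^-$ rather than $B_n^+\times B_n^+$, so that the odd basis vectors $f_j$ are numbered opposite to the flag, and one must keep this consistent with the order-reversal that odd-symmetry induces on $\delta$. One must also dispose of the degenerate cases in which $\underline d$ or $\overline d$ is the first or last element of $\delta$ (equivalently, one of the relevant minima is taken over the empty set), which is exactly what the convention $0\vert0,\ n\vert n\in\delta$ settles; checking the remaining root families (the even roots and the odd roots $x_i-y_j$) goes through in the same way, interchanging the roles of the two coordinates as odd-symmetry does. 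Apart from this the proof is a routine variant of the two that precede it.
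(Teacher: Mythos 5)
Your proof is correct and takes essentially the same route as the paper's: the same membership criterion for $y_j-x_i$, the same auxiliary elements $\underline{d_0}\vert\underline{d_1}$, $\overline{d_0}\vert\overline{d_1}$, $\tilde{d_0}\vert\tilde{d_1}$, the same use of the consecutiveness of the odd-symmetric images $(n-\overline{d_1})\vert(n-\overline{d_0})$ and $(n-\underline{d_1})\vert(n-\underline{d_0})$ in the forward direction, and the same strict-betweenness contradiction in the converse. The additional framing (transposition-invariance of the up-set, the endpoint conventions) only makes explicit what the paper leaves implicit.
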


\begin{proof}
 
First assume $\delta$ is symmetric and let $\alpha = (y_j - x_i) \in \Phi$. We need to show $\tau(-\alpha) = (y_i - x_j) \in \Phi$.
Let $\underline{d_0} \vert \underline{d_1} = \max \{ d_0 \vert d_1 \in \delta: i > d_0 \}$ and $\overline{d_0} \vert \overline{d_1} = \min \{ d_0 \vert d_1 \in \delta: i \leq d_0 \}$.
Furthermore let $\tilde{d_0} \vert \tilde{d_1} = \min \{ d_0 \vert d_1 \in \delta: j \leq d_0\}$. Then $(y_i - x_j) \in \Phi$,
if and only if $ n-i+1 \leq \tilde{d_1}$. 

Now $n - \overline{d_1} < j$ and by symmetry, $n - \underline{d_1} \vert n - \underline{d_0}$ is the succesor of $n - \overline{d_1} \vert n - \overline{d_0}$ in $\delta$,
so $\tilde{d_0} \geq n - \underline{d_1}$ and as $i > \underline{d_0}$, this implies $n - i + 1 \leq n - \underline{d_0} \leq \tilde{d_1}$, so $\tau(-\alpha) \in \Phi$.

Now assume $D$ is not strongly measurable, so there exists $\alpha = y_j - x_i \in \Phi$, such that $\tau(-\alpha) = y_i - x-j \not\in \Phi$.
Let $\underline{d_0} \vert \underline{d_1}, \overline{d_0} \vert \overline{d_1}$ and $\tilde{d_0} \vert \tilde{d_1}$ as before.
As $\tau(-\alpha) \not\in \Phi$, $n - i + 1 > \tilde{d_1}$. But $i > \underline{d_0}$, so $n- \overline{d_0} < n-i+1 \leq n-\underline{d_0}$. Therefore $\tilde{d_1} < n - \underline{d_0}$. 
Also $\tilde{d_0} \geq j >  n-\overline{d_1}$. Altogether this yields

\[ n-\overline{d_1} \vert n-\overline{d_0} < \tilde{d_0} \vert \tilde{d_1} < n-\underline{d_1} \vert n-\underline{d_0} \]

\noindent And as $n-\tilde{d_1} \vert n - \tilde{d_0} \not\in \delta$, $\delta$ is not symmetric. The proof proceeds analagously for even roots.

\end{proof}

\begin{prop}
 
If $\delta$ is $\Pi$-symmetric then $D$ is weakly measurable.

\end{prop}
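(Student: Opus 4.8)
The plan is to show that $D$ carries a $G_\mathbb{R}$-invariant Berezinian density; that $D_0$ fails to be measurable (so that $D$ is \emph{weakly}, not strongly, measurable) is then immediate from the preceding proposition, since a $\delta$ that is $\Pi$-symmetric but not odd-symmetric yields a non-strongly-measurable open orbit. Exactly as in the proof of the strong-measurability theorem and the previous two propositions, the existence of an invariant Berezinian is equivalent, via Theorem 4.13 of \cite{AH}, to the vanishing of the supertrace of the isotropy representation of $\mathfrak{l} = \mathfrak{p} \cap \tau\mathfrak{p}$ on $\mathfrak{g}/\mathfrak{l}$. Odd elements act with vanishing supertrace, and this supertrace is a character of $\mathfrak{l}_0$ which therefore vanishes as soon as it vanishes on the maximal torus $\mathfrak{h}_0 \subseteq \mathfrak{l}_0$; so the task reduces to the identity
\[ \sum_{\alpha \in \Sigma \setminus (\Phi \cap \tau\Phi)} (-1)^{|\alpha|}\, \alpha \;=\; 0 \qquad \text{in } \mathfrak{h}^*. \]

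To prove this I would group the roots. The ``diagonal'' odd roots form pairs $\{y_i - x_i,\, x_i - y_i\}$; since $\tau$ interchanges $x_i$ and $y_i$ it swaps the two members of each such pair, so each pair lies entirely inside or entirely outside $\Phi \cap \tau\Phi$, and either way contributes $0$ (the two roots are opposite). The remaining roots split, for ordered $i \neq j$, into quadruples $Q_{ij} = \{x_j - x_i,\, y_j - y_i,\, y_j - x_i,\, x_j - y_i\}$. Since $\tau$ acts here by $\tau(x_j - x_i) = y_j - y_i$ and $\tau(y_j - x_i) = x_j - y_i$, it maps $Q_{ij}$ to itself, interchanging internally the even pair $\{x_j - x_i,\, y_j - y_i\}$ and internally the odd pair $\{y_j - x_i,\, x_j - y_i\}$; hence membership in $\Phi \cap \tau\Phi$ is constant on each of these two pairs. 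Letting $u_{ij}$, resp. $v_{ij}$, be the indicator that the even, resp. odd, pair of $Q_{ij}$ is contained in $\Phi \cap \tau\Phi$, a short computation shows that the graded sum of the whole quadruple is $0$ and that the graded sum of $Q_{ij} \setminus (\Phi \cap \tau\Phi)$ equals $(v_{ij} - u_{ij})(z_j - z_i)$ with $z_k := x_k + y_k$; in particular a quadruple contributes nothing unless its even and odd pairs disagree.

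It remains to prove $\sum_{i \neq j}(v_{ij} - u_{ij})(z_j - z_i) = 0$, equivalently $R_k = C_k$ for all $k$, where $R_k = \sum_i (v_{ik} - u_{ik})$ and $C_k = \sum_j (v_{kj} - u_{kj})$. Here is where $\Pi$-symmetry is used: because $n = m$ and $d_0 = d_1$ for every entry of $\delta$, the $e$-part and the $f$-part of each flag have the same dimension pattern, so all the membership conditions coming from the formula $(y_j - x_i) \in \Phi \Leftrightarrow n - j + 1 \leq \min\{d_1 : d_0|d_1 \in \delta,\, i \leq d_0\}$ and its three analogues are expressed through a single block partition of $\{1, \dots, n\}$, with block function $\beta$ and $\bar\beta(k) := \beta(n+1-k)$. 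One finds $u_{ij} = 1 \Leftrightarrow (\beta(j), \bar\beta(j)) \leq (\beta(i), \bar\beta(i))$ and $v_{ij} = 1 \Leftrightarrow (\beta(j), \bar\beta(j)) \leq (\bar\beta(i), \beta(i))$, the comparisons being componentwise. Consequently $R_k - C_k$ is an alternating combination of the numbers of $\ell$ with $(\beta(\ell), \bar\beta(\ell))$ lying $\geq$ (resp. $\leq$) either $(\beta(k), \bar\beta(k))$ or its coordinate-swap $(\bar\beta(k), \beta(k))$. But the multiset $M = \{(\beta(\ell), \bar\beta(\ell)) : 1 \leq \ell \leq n\}$ is invariant under swapping its two coordinates, since $(\bar\beta(\ell), \beta(\ell)) = (\beta(n+1-\ell), \bar\beta(n+1-\ell)) \in M$; hence the counts at $(\beta(k), \bar\beta(k))$ and at $(\bar\beta(k), \beta(k))$ agree, forcing $R_k = C_k$. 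This yields the displayed identity and therefore the invariant Berezinian.

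The main obstacle is the middle step: recognizing the pair-grouping inside each quadruple forced by $\tau$, and then translating the four membership conditions of a quadruple into componentwise comparisons for one block partition — in particular getting the ``reversed'' $f$-flag indices right. Once that structure is in place, reducing $\sum_{i\neq j} (v_{ij}-u_{ij})(z_j - z_i) = 0$ to the coordinate-swap symmetry of the multiset $M$ is short, and the passage from ``invariant Berezinian exists'' to ``weakly measurable'' is exactly as in the rest of this section.
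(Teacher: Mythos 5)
Your argument is correct and reaches the same reduction as the paper --- the vanishing of the graded sum $\sum_{\alpha\in\Sigma\setminus(\Phi\cap\tau\Phi)}(-1)^{|\alpha|}\alpha$ on $\mathfrak{h}^*$ --- but the combinatorial core is genuinely different. The paper groups the roots into the quadruples $\{x_j-x_i,\; y_{n-j+1}-y_{n-i+1},\; x_j-y_{n-i+1},\; y_{n-j+1}-x_i\}$, with the index reversal on the $y$-side built in to match the opposite Borel $B_n^-$; for these, $\Pi$-symmetry forces all four roots to lie in $\Phi$ (and likewise in $\tau\Phi$) simultaneously or not at all, so each quadruple has graded sum zero and contributes nothing individually, with no further bookkeeping required. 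You instead take the naive $\tau$-stable quadruples $Q_{ij}=\{x_j-x_i,\,y_j-y_i,\,y_j-x_i,\,x_j-y_i\}$, whose even and odd halves can genuinely disagree about membership in $\Phi\cap\tau\Phi$; this is what forces your extra global step $R_k=C_k$, which you settle correctly via the swap-invariance of the multiset $\{(\beta(\ell),\bar\beta(\ell))\}$ under $\ell\mapsto n+1-\ell$ (in fact your argument shows $R_k=C_k=0$). Your route is longer but more systematic, and you are more careful than the paper on two points: you track the reversed $f$-indices explicitly through $\bar\beta(k)=\beta(n+1-k)$, and you address why $D_0$ fails to be measurable, which is needed for \emph{weak} measurability as defined and which the paper's proof omits. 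Note only that when $\delta$ happens to be odd-symmetric as well as $\Pi$-symmetric the orbit is strongly rather than weakly measurable, an edge case that the statement of the proposition (and the paper's own proof) glosses over too.
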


\begin{proof}
 
Let $\alpha = (x_j - x_i) \in \Phi \cap \tau\Phi$, so $\tau(\alpha) = y_j - x_i \in \Phi$. $\Pi$-symmetry 
is equivalent to the fact that if one of $x_j - x_i, y_{n-j+1} - y_{n-i+1}, x_j - y_{n-i+1}$ and $y_{n-j+1} - x_i$ is in $\Phi$, then so are the other three.
The same fact applied to $\tau(\alpha)$ yields that they are all in $\Phi \cap \tau\Phi$.
Conversely if $\alpha$ is not in $\Phi \cap \tau\Phi$ then so neither will be the other three. 
Now when computing the supertrace of the action of $\mathfrak{p}$ on $(\mathfrak{g}/\mathfrak{p})^*$ the contributions of these four
roots will add up to zero. As $\alpha$ was arbitrary, the supertrace will be zero and therefore $D$ is weakly measurable.

\end{proof}

\subsubsection{The Case $\mathfrak{g}_\mathbb{R} = \mathfrak{us}\pi(n)$}

In this case $n = m$ and the defining involution is

\[ \tau \begin{pmatrix} A & B \\ C & D \end{pmatrix} = \begin{pmatrix} -D^\dagger & B^\dagger \\ -C^\dagger & -A^\dagger \end{pmatrix} \]

\noindent and the action of $\tau$ on $\Sigma(\mathfrak{g} : \mathfrak{h})$ is

\[ \tau(x_j - x_i) = y_i - y_j \]
\[ \tau(y_j - x_i) = y_i - x_j \]  

\noindent Here the $G_{0\mathbb{R}}$-orbit through the neutral point in $G_0/B_0$, where $B_0 = B_n^+ \times B_n^+$, is open and
projects onto $D_0 \subseteq Z_0$.

\begin{prop}
 
A flag domain $D$ is open if and only if $\delta$ is $\Pi$-symmetric. If this is the case $D$ is strongly measurable.

\end{prop}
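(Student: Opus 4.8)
The plan is to argue as in the two preceding cases and then to apply the measurability lemma. As there, fix the base point of the orbit, so that $\mathfrak p$ is the stabiliser of the standard flag $V_\bullet$ of type $\delta$; the identity $(y_j-x_i)\in\Phi\Leftrightarrow\exists X\in\mathfrak p\colon X(e_i)=f_j$ and its three analogues then give that $(y_j-x_i)\in\Phi$ iff $j\le\min\{d_1:d_0\vert d_1\in\delta,\ i\le d_0\}$, that $(x_j-y_i)\in\Phi$ iff $j\le\min\{d_0:d_0\vert d_1\in\delta,\ i\le d_1\}$, with the analogous statements for the even roots. Recall that $D$ is open iff $\Phi^c\cap\tau\Phi^c=\emptyset$, and that by the lemma $D$ is strongly measurable iff $\alpha\in\Phi\Leftrightarrow\tau(-\alpha)\in\Phi$ for all $\alpha\in\Sigma$. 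The relevant features of $\tau$ are: on odd roots it transposes the two indices ($y_j-x_i\mapsto y_i-x_j$ and $x_j-y_i\mapsto x_i-y_j$), and on even roots $x_j-x_i\mapsto y_i-y_j$. In particular $\tau$ interchanges the positive and negative even roots, so no even root lies in $\Phi^c\cap\tau\Phi^c$ and, for even $\alpha$, $\tau(-\alpha)$ lies on the same side as $\alpha$; and the only $\tau$-fixed roots are the diagonal odd ones $y_i-x_i$ and $x_i-y_i$.

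For "$D$ open $\Leftrightarrow$ $\delta$ is $\Pi$-symmetric" I would work with a Borel $\mathfrak b$ with $\mathfrak h\subseteq\mathfrak b\subseteq\mathfrak p$ (as in the preceding cases): within a single step $V_l/V_{l-1}$ one may order the basis vectors freely, but $e_i$ and $f_i$ lying in different steps are ordered by their steps. For each $i$ exactly one of $y_i-x_i$, $x_i-y_i$ is negative, and from the threshold descriptions this negative root lies in $\Phi$ precisely when $e_i$ and $f_i$ belong to the same step of $V_\bullet$ — equivalently, when the smallest $d_0\vert d_1\in\delta$ with $d_0\ge i$ also has $d_1\ge i$, and conversely. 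This holds for every $i$ if and only if $d_0=d_1$ for all $d_0\vert d_1\in\delta$, i.e.\ $\delta$ is $\Pi$-symmetric. Hence, if $\delta$ is not $\Pi$-symmetric, an index $i$ with $d_1^l<i\le d_0^l$ (or $d_0^l<i\le d_1^l$) yields a $\tau$-fixed negative root outside $\Phi$, so $\Phi^c\cap\tau\Phi^c\ne\emptyset$ and $D$ is not open. For the converse one must actually prove $\Phi^c\cap\tau\Phi^c=\emptyset$ when $\delta$ is $\Pi$-symmetric; this is a computation with the thresholds, using that the smallest $d_0\vert d_1\in\delta$ with $d_0\ge i$ already has $d_0=d_1\ge i$, and is entirely analogous to the verifications in the $\mathfrak{sl}_{m\vert n}(\mathbb R)$ and ${}^{0}\mathfrak{pq}(n)$ cases.

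For strong measurability, suppose $\delta$ is $\Pi$-symmetric; by the first part $D$ is then open, so the lemma applies. For an odd root $\alpha=y_j-x_i$ one has $\tau(-\alpha)=x_j-y_i$, and the thresholds give $y_j-x_i\in\Phi\Leftrightarrow j\le\min\{d_1:i\le d_0\}$ and $x_j-y_i\in\Phi\Leftrightarrow j\le\min\{d_0:i\le d_1\}$; by $\Pi$-symmetry both bounds equal the common value $d_0=d_1$ of the smallest element of $\delta$ with $d_0\ge i$, so the two conditions agree. The even roots are treated the same way, with $\tau(-\alpha)=y_j-y_i$ for $\alpha=x_j-x_i$. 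Hence $\alpha\in\Phi\Leftrightarrow\tau(-\alpha)\in\Phi$ for all $\alpha$, and $D$ is strongly measurable. I expect the only genuinely delicate point to be the converse in the middle paragraph — showing that for $\Pi$-symmetric $\delta$ the whole of $\Phi^c\cap\tau\Phi^c$, not merely its diagonal part, is empty — since this forces one to fix the Borel inside $\mathfrak p$ compatibly with $\tau$ and to keep track of the index transposition on the off-diagonal odd roots; the rest is routine bookkeeping of the same kind as in the earlier propositions.
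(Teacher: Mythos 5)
Your proposal is correct and follows essentially the same route as the paper: openness is reduced to the $\tau$-fixed diagonal odd roots $\pm(y_i-x_i)$ (whose membership in $\Phi$ for all $i$ is exactly $\Pi$-symmetry), and strong measurability is obtained from the Lemma by checking $\alpha\in\Phi\Leftrightarrow\tau(-\alpha)\in\Phi$, which under $\Pi$-symmetry amounts to the equality of the two thresholds (equivalently, the paper's observation that the four roots $x_j-x_i$, $y_j-y_i$, $y_j-x_i$, $x_j-y_i$ enter or leave $\Phi$ together). Your treatment is in fact slightly more explicit than the paper's on the sufficiency of $\Pi$-symmetry for openness, which the paper leaves implicit.
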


\begin{proof}
 
The involution $\tau$ acts trivially on the roots $\pm (y_i - x_i)$ for all $1 \leq i \leq n$. So for $\Phi \cap \tau \Phi$
to be empty, one needs $\pm (y_i - x_i) \in \Phi$ for all $1 \leq i \leq n$. This is equivalent to $\Pi$-symmetry.

$\Pi$-symmetry also implies that if one of $x_j - x_i, y_j - y_i, y_j - x_i$ and $x_j - y_i$ is in $\Phi$, then
so are the other three. Now $y_j - y_i = \tau(-(x_j - x_i))$ and $y_j - x_i = \tau(-(y_j - x_i))$. So $\Pi$-symmetry
also yields strong measurability. 

\end{proof}

\subsection{Types B(n,m),C(m) and D(n,m), orthosymplectic superalgebras} 

Now suppose we are given we are given a $\mathbb{C}$-supervector space $V^{k \vert 2m}$ with a non-degenerate even super-symmetric
bilinear form $S: V \times V \rightarrow \mathbb{C}$. If $\mathfrak{g}$ is of type $B(n,m)$, $k = 2n+1$, if it is of type $C(m)$, then $k = 2$
and if it is of type $D(n,m)$, then $k = 2n$. One can always choose a basis $e_1, \ldots, e_k,f_1, \ldots, f_{2m}$ of $V$ such that

\[ S(e_i,f_l) = 0 , S(e_i,e_j) = \delta_{i,k-j}, S(f_l,f_a) = \delta_{l,2m-a} \forall 1 \leq i,j \leq k, 1 \leq l,a \leq 2m \]  

\noindent Moreover if $\mathfrak{g}$ stabilizes a subspace $W \subseteq V$, then it also stabilizes the orthogonal complement $W^\perp$.
Consequently every dimension sequence will automatically be even-symmetric. For those real forms where $\tau$
acts on $\Sigma$ by $\tau(\alpha) = - \alpha$, every orbit $D$ with $D_0$ open will therefore always have maximal odd dimension 
and be measurable. The only real form for which this is not the case is the real form $\mathfrak{g}_\mathbb{R} = \mathfrak{osp}(2p+1,2q+1 \vert 2m)$
of $\mathfrak{g} = \mathfrak{osp}(2n \vert 2m)$.

In that particular case the action of $\tau$ on $\Sigma$ is given by

\[ \tau(x_n - x_j) = x_n + x_j , \tau(x_n - y_j) = x_n + y_j , \tau(\alpha) = - \alpha, \ \textnormal{else} \]

\noindent This yields the following

\begin{satz}

Let $\mathfrak{g} = \mathfrak{osp}(2n \vert 2m)$, $\mathfrak{g}_\mathbb{R} = \mathfrak{osp}(2p+1,2q+1 \vert 2m)$ and $D$ as above. Then:

\begin{enumerate}
 \item A flag domain $D$ has maximal odd dimension, if and only if $n \vert d \not\in \delta$ for all $d < m$
 \item A flag domain $D$ is strongly measurable, if and only if $n \vert m \not\in \delta$ or $n-1 \vert m \in \delta$
 \item A flag domain $D$ is weakly measurable, if and only if $n \vert m \in \delta$ and its immediate predecessor is $n-d-1 \vert m-d$, $1 \leq d \leq \min \{n-1, m\}$ 
\end{enumerate}
  
\end{satz}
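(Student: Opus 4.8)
The plan follows the method of the $A(m,n)$-subsection. Reading off the $\tau$-formulas preceding the statement, $\tau$ fixes $x_n$ and negates $x_1,\dots,x_{n-1},y_1,\dots,y_m$; hence on $\Sigma$ it acts by $\alpha\mapsto-\alpha$ \emph{except} on the \emph{$x_n$-roots} $\pm(x_n\pm x_j)$ $(1\le j<n)$ and $\pm(x_n\pm y_j)$ $(1\le j\le m)$, which it permutes in $\tau$-orbits $\{x_n-\rho,\,x_n+\rho\}$ and $\{\rho-x_n,\,-x_n-\rho\}$, $\rho\in\{x_j,y_j\}$. A root negated by $\tau$ lies neither in $\Phi^c\cap\tau\Phi^c$ (it cannot be negative together with its negative), nor in the obstruction to the Lemma (for such $\alpha$ one has $\alpha\in\Phi\Leftrightarrow\tau(-\alpha)=\alpha\in\Phi$ trivially), nor --- after cancelling $\pm$-pairs --- in the supertrace; so all three claims concern the $x_n$-roots alone. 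For the set-up I fix the distinguished Borel of $\mathfrak{osp}(2n|2m)$ with $x_n$ the largest coordinate, so that the $x_n$-roots are ordered as needed; then $\mathfrak p$ is the stabiliser of a flag of type $\delta$ which, $\delta$ being even-symmetric, may be taken self-$\perp$, with largest isotropic member $V_k$, and the even part of $\Phi^c\cap\tau\Phi^c$ is automatically empty so that $D_0$ is open, as in the other orthosymplectic cases. To a weight $\lambda$ of $V$ I attach its level $\ell(\lambda)=\min\{s:v_\lambda\in V_s\}$ in this flag; orthosymplecticity gives $\mathfrak g^{\mu-\lambda}\subseteq\mathfrak p\Leftrightarrow\ell(\mu)\le\ell(\lambda)$ (the $S$-dual inequality being the same one, since $\ell(\lambda)+\ell(-\lambda)$ is independent of $\lambda$), so everything reduces to comparing $\ell(\pm x_n)$ with the $\ell(\pm x_j)$ and $\ell(\pm y_j)$. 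Three facts are used throughout: $\ell(x_n)\le\ell(-x_n)$, with strict inequality precisely when $\delta$ has a member of orthogonal dimension $n$ --- in which case $\ell(x_n)$ and $\ell(-x_n)$ are the first and the last-plus-one position of the ``orthogonal-dimension-$n$ run''; $\ell(x_j)\le\ell(x_n)$ for $j<n$; and $\ell(-y_j)$ always lies past the isotropic part of the flag.

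\textbf{Item 1.} By the codimension formula $D$ has maximal odd dimension iff $\Phi^c\cap\tau\Phi^c$ contains no odd root. By the reduction such a root would force a $\tau$-orbit $\{y_j-x_n,\,-x_n-y_j\}$ with both members negative and outside $\mathfrak p$, i.e.\ some $j\le m$ with $\ell(x_n)<\ell(y_j)<\ell(-x_n)$, i.e.\ with $\ell(y_j)$ strictly inside the orthogonal-dimension-$n$ run. Such a $j$ exists iff that run contains members of two distinct symplectic dimensions, and --- by even-symmetry --- this happens iff $\delta$ has a member $n\,|\,d$ with $d<m$. Hence maximal odd dimension holds iff $n\,|\,d\notin\delta$ for all $d<m$; this forces $V_k$ either to have orthogonal dimension $<n$ or to equal the Lagrangian $n\,|\,m$, and I carry the remaining items out under this hypothesis.

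\textbf{Items 2 and 3.} By the Lemma, $D$ is strongly measurable iff $\alpha\in\Phi\Leftrightarrow\tau(-\alpha)\in\Phi$ for all $\alpha$, which is automatic off the $x_n$-roots, where it reads $\ell(x_n)\le\ell(\rho)\Leftrightarrow\ell(-x_n)\le\ell(\rho)$ (with a mirror statement), $\rho\in\{x_j,y_j\}$. If $V_k$ has orthogonal dimension $<n$ then $\ell(x_n)=\ell(-x_n)$ and all these hold; so assume $V_k=n\,|\,m$, so the run is the single member $V_k$ and an equivalence fails only when $\ell(\rho)$ equals the position of $V_k$. Writing $a\,|\,b:=\dim V_{k-1}$ for the predecessor, a short computation shows this happens for some $\rho=x_j$ exactly when $a\le n-2$ and for some $\rho=y_j$ exactly when $b<m$; so $D$ fails to be strongly measurable iff $\dim V_{k-1}\ne(n-1)\,|\,m$, i.e.\ iff $n\,|\,m\in\delta$ and $n-1\,|\,m\notin\delta$, which is Item 2. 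For Item 3, $D$ carries an invariant Berezinian iff the supertrace of $\mathfrak p$ on $(\mathfrak g/\mathfrak p)^*$ --- the graded sum of the roots in $\Sigma\setminus(\Phi\cap\tau\Phi)$ --- vanishes, whereas $D_0$ fails to be measurable iff its \emph{even} part is nonzero; since $\tau=-\mathrm{id}$ on the symplectic roots, that even part comes from the even $x_n$-roots of the $\mathfrak{so}(2n)$-factor and, in the case $V_k=n\,|\,m$, equals $-2(n-1-a)\,x_n$ (and is $0$ otherwise), while the same bookkeeping for the odd $x_n$-roots gives the odd part as $-2(m-b)\,x_n$. Hence $D_0$ is non-measurable and $D$ carries an invariant Berezinian iff $V_k=n\,|\,m$, $a<n-1$, and $n-1-a=m-b$; equivalently, the immediate predecessor of $n\,|\,m$ in $\delta$ is $n-d-1\,|\,m-d$ with $1\le d\le\min\{n-1,m\}$.

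\textbf{Main obstacle.} The conceptual content is short; the work is the level bookkeeping on the self-$\perp$ flag and keeping track of the two families of maximal isotropic subspaces in the orthogonal factor once $n\,|\,m\in\delta$. The delicate point is the converse in Item 3: one must show the stated predecessor shape is the \emph{only} one forcing the even and odd supertrace defects to cancel, which rests on the exact count --- sketched above --- of how many $x_n$-roots enter and leave $\Phi$ under $\tau$ as a function of $\dim V_{k-1}$.
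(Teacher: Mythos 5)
Your proposal is correct and follows essentially the same route as the paper: it isolates the $x_n$-roots (the only ones not negated by $\tau$), handles maximal odd dimension via the codimension formula, strong measurability via the root-theoretic criterion of the Lemma, and weak measurability by computing the even and odd supertrace defects ($-2(n-1-a)x_n$ and $-2(m-b)x_n$) and matching them. Your level-function bookkeeping is just a more explicit organization of the same counting the paper performs directly on the dimension sequence, and your index counts are in fact slightly more careful than those in the paper's own proof of part 3.
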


\begin{proof}
 
1. If $\dim_1(D)$ is not maximal there must be some $\alpha \in \Phi^c \cap \tau\Phi^c$. By virtue of even symmetry
this can only happen for those roots which satisfy $\tau(\alpha) \neq -\alpha$. As $-x_n - y_j \in \Phi$ for all $1 \leq j \leq m$, 
we may assume $\alpha = x_n - y_j$. Then $\tau(\alpha) = x_n + y_j$, which is always an element of $\Phi^c$ because of even symmetry.
So $D$ does not have maximal odd dimension precisely when $x_n - y_j \in \Phi^c$ for some $1 \leq j \leq m$. This is only the case, if $n \vert d \in \delta$ for some $d < j$.

2. $D_0$ is measurable if and only if either $n \not\in \delta_0$ or $n-1 \in \delta_0$. If $n \vert m \not\in \delta$ or $n-1 \vert m \in \delta$, then $x_n \pm y_j \in \Phi$,
if and only if $- x_n \pm y_j \in \Phi$, so these contributions cancel in the sum of odd roots, yielding strong measurability.

3. Assume $n-d-1 \vert m-d \in \delta$ and $n \vert m$ is its succesor in $\delta$. Then the even roots $\alpha \in \Phi \cap \tau\Phi$
with $-\alpha \not\in \Phi \cap \tau\Phi$ are $-x_j - x_n$ and $x_j - x_n$ for all $n-d-1 \leq j \leq n-1$. On the other hand 
the odd roots $\alpha \in \Phi \cap \tau\Phi$ with $-\alpha \not\in \Phi \cap \tau\Phi$ are $-x_n - y_j$ and $-x_n + y_j$ for all $m - d \leq j \leq m$.
The graded sum of all these roots is zero so $D$ is weakly measurable. The converse stems from the fact
that the odd roots will always be the given ones and the only way to cancel them with even roots is with the given even roots. 

\end{proof}

This completes the characterization of measurability for the orthosymplectic superalgebras.

\subsection{The exceptional Lie superalgebras}

As the existence of non-measurable open orbits requires the existence of a non-trivial automorphism of the Dynkin diagram,
these can only occur for the exceptional superalgebras $E_6$ and $D(2,1,\alpha)$. For all other exceptional Lie superalgebras and all of their real forms,
the open orbits in any $Z(\delta) = G/P$ will be measurable. Now consider the two remaining cases:

The exceptional family $D(2,1,\alpha)$ does have the same root system as the simple Lie superalgebra $D(2,1)$. Moreover,
if $\mathfrak{g}$ is of type $D(2,1,\alpha)$, then $\mathfrak{g}_0 \cong \mathfrak{sl}_2(\mathbb{C}) \oplus \mathfrak{sl}_2(\mathbb{C}) \oplus \mathfrak{sl}_2(\mathbb{C})$ 
and there are three real forms with respective even parts $\mathfrak{g}_{0\mathbb{R}} \cong \mathfrak{sl}_2(\mathbb{R}) \oplus \mathfrak{sl}_2(\mathbb{R}) \oplus \mathfrak{sl}_2(\mathbb{R})$, 
$\mathfrak{g}_{0\mathbb{R}} \cong \mathfrak{sl}_2(\mathbb{R}) \oplus \mathfrak{su}(2) \oplus \mathfrak{su}(2)$ or $\mathfrak{g}_{0\mathbb{R}} \cong \mathfrak{sl}_2(\mathbb{R}) \oplus \mathfrak{sl}_2(\mathbb{C})$.

For the first two of these, the action of $\tau$ on the root system is $\tau(\alpha) = -\alpha$, so $D$ will always have maximal odd dimenson and be measurable.
For the third real form, the action of $\tau$ on the root system is given by

 \[\tau(x_1 - x_2) = (-x_1 - x_2), \tau(\pm y \pm x_1) = \mp y \mp x_1, \tau(\pm y \pm x_2) = \mp y \pm x_2\]

\noindent This is the same action as for the real form $\mathfrak{osp}(1,3 \vert 2)$ of $\mathfrak{osp}(4,2)$. 
Moreover, this real form can only occur, if $\alpha = 1, -\frac{1}{2}$ or $-2$ and then $\mathfrak{g}$ and $\mathfrak{g}_\mathbb{R}$ are 
isomorphic to $\mathfrak{osp}(4,2)$ and $\mathfrak{osp}(1,3 \vert 2)$ respectively. The measurable open orbits in this case have been
classified above.


For $E_6$ there are two real forms allowing non-measurable open orbits, namely $E_{6,C_4}$ and $E_{6,F_4}$. 
Non-measurable open orbits for these real forms can be constructed by the procedure outlined above.

\subsection{Type P(n)}

Now consider the supervectorspace $V = \mathbb{C}^{n \vert n}$ with a non-degenerate odd super-skewsymmetric bilinear form $\omega$.
Then there is a standard basis $e_1, \ldots, e_n,$  $f_1, \ldots, f_n$ of $V$ such that

\[ \omega(e_i,e_j) = \omega(f_i,f_j) = 0 , \omega(e_i,f_j) = \delta_{ij} \forall 1 \leq i,j \leq n \]

\noindent If $\mathfrak{g} = \mathfrak{\pi}(n)$ stabilizes a subspace $W \subseteq V$, then it also stabilizes $W^\perp$, so every dimension
sequence will automatically be odd-symmetric. The only possible real forms of $\mathfrak{g}$ are $\mathfrak{g}_\mathbb{R} = \pi_\mathbb{R}(n)$
and $\pi_\mathbb{H}(m)$, if $n = 2m$ is even. In both cases the action of $\tau$ on $\Sigma$ is the following:

\[ \tau(\pm x_i \pm x_j) = \pm x_{n-i+1} \pm x_{n-j+1} \]

\noindent Note especially that the roots $\alpha = -2x_j$, for which $-\alpha \not\in \Sigma$ are fixed by $\tau$. This implies the following:

\begin{satz}
 
A flag domain $D$ has maximal odd dimension if and only if $\delta$ is $\Pi$-symmetric.

\end{satz}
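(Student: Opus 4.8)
The plan is to run the same machine as in the previous cases via the codimension formula: for a flag domain $D$ (so that $D_0$ is open) one has maximal odd dimension if and only if $\Phi^c \cap \tau\Phi^c = \emptyset$. Since for $\mathfrak{g} = \pi(n)$ every dimension sequence is automatically odd-symmetric, and since $\tau$ acts on $\Sigma$ by the index reversal $x_i \mapsto x_{n-i+1}$ — which only permutes the even roots among themselves but preserves the sign of every odd root — no even root can lie in $\Phi^c\cap\tau\Phi^c$ (this is the same phenomenon that makes $D_0$ automatically open once the base point is chosen so that $\mathfrak{h}\cap\mathfrak{g}_\mathbb{R}$ is maximally compact). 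Hence the assertion reduces to: for every negative odd root $\alpha$, at least one of $\mathfrak{g}^\alpha$ and $\mathfrak{g}^{\tau\alpha}$ is contained in $\mathfrak{p}$.

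The next step is to translate "$\mathfrak{g}^\alpha \subseteq \mathfrak{p}$" into combinatorics of $\delta$. Here $\mathfrak{p}$ stabilises a flag $0 \le V_1 \le \dots \le V_k \le \mathbb{C}^{n|n}$; because $\mathfrak{p}\supseteq\mathfrak{b}$ the flag is closed under $\omega$-perpendicular, and the members $V_i$ with $d_0^i + d_1^i \le n$ are $\omega$-isotropic, so the odd part of each such $V_i$ is forced to be transverse to the $\omega$-dual of its even part. The negative odd root spaces are the symmetric maps $\varphi_{ab}: e_a\mapsto f_b,\ e_b\mapsto f_a$ (for $a\le b$, the case $a=b$ giving the roots $-2x_a$), and $\varphi_{ab}$ preserves the flag exactly when a specific inequality relating the sequences $(d_0^i)$ and $(d_1^i)$ holds; the condition for $\mathfrak{g}^{\tau\alpha}$ is the same inequality with the indices replaced by $n-a+1,\ n-b+1$, and under the perpendicular duality $V_i\leftrightarrow V_i^\perp$ the two conditions are interchanged. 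The decisive roots are the unpaired ones $-2x_a$ (here $-\alpha=2x_a\notin\Sigma$, so nothing balances them) and, more generally, the odd roots $-(x_a+x_{n-a+1})$ fixed by $\tau$.

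To finish, if $\delta$ is $\Pi$-symmetric I would take the base point $V_i = \langle e_1,\dots,e_{d^i}\rangle\oplus\langle f_{n-d^i+1},\dots,f_n\rangle$, which is $\omega$-self-dual and isotropic for $d^i \le n/2$, and check directly that for every negative odd root $\alpha$ either $\mathfrak{g}^\alpha$ or $\mathfrak{g}^{\tau\alpha}$ lies in $\mathfrak{p}$, hence $\Phi^c\cap\tau\Phi^c=\emptyset$. Conversely, if $\delta$ is not $\Pi$-symmetric there is some $i$ with $d_0^i\neq d_1^i$; choosing $a$ in the gap between $d_0^i$ and $d_1^i$ and using odd-symmetry to pass between $V_i$ and $V_i^\perp$, one exhibits a $\tau$-fixed odd root $\alpha$ (of the form $-2x_a$ when $a=n-a+1$, and $-(x_a+x_{n-a+1})$ otherwise) with $\mathfrak{g}^\alpha\not\subseteq\mathfrak{p}$ and $\mathfrak{g}^{\tau\alpha}=\mathfrak{g}^\alpha\not\subseteq\mathfrak{p}$, so that $\alpha\in\Phi^c\cap\tau\Phi^c$ and $D$ is not of maximal odd dimension.

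The step I expect to be the real obstacle is the middle one: because of the odd form $\omega$ the odd parts of the flag members are not coordinate subspaces (they are pinned down only up to the transversality constraint), so the dictionary between "odd root space contained in $\mathfrak{p}$" and inequalities on $\delta$, and especially the verification that it is exactly the $\tau$-fixed — in particular the unpaired — odd roots that obstruct maximality when $\Pi$-symmetry fails, requires keeping careful track of the interplay of the flag, the form $\omega$, and the reversal $i\mapsto n-i+1$.
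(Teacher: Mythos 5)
Your proposal follows essentially the same route as the paper: reduce maximality of the odd dimension to emptiness of the odd part of $\Phi^c\cap\tau\Phi^c$, observe that the $\tau$-fixed odd roots $\pm(x_a+x_{n-a+1})$ (and the unpaired ones) are exactly what forces both $d_0\geq d_1$ and $d_1\geq d_0$, hence $\Pi$-symmetry, and handle the converse by checking that under $\Pi$-symmetry no odd root $-(x_i+x_j)$ can lie in $\Phi^c\cap\tau\Phi^c$. The paper carries out the "middle step" you flag (the dictionary between odd root spaces in $\mathfrak{p}$ and inequalities on $\delta$) via the explicit criterion $\exists X\in\mathfrak{p}: X(f_i)=e_{n-i+1}$, but the strategy is the same.
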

 
\begin{proof}
 
For $\dim_1(D)$ to be maximal all roots fixed by $\tau$ must be in $\Phi$. These are presiely the roots $\alpha = \pm (x_i + x_{n-i+1})$.
Now $ - x_i - x_{n-i+1} \in \Phi$ if and only if there is some $X \in \mathfrak{p}$ such that $X(f_i) = e_{n-i+1}$. For this to hold for all 
$1 \leq i \leq n$ one needs $d_0 \geq d_1$ for all $d \in \delta$. On the other hand, $- x_i - x_{n-i+1} \in \Phi$ requires $d_1 \geq d_0$
for all $d \in \delta$. As both these conditions must be satisfied, $\delta$ is $\Pi$-symmetric. 

Conversely suppose there is an odd root $\alpha \in \Phi^c \cap \tau\Phi^c$, without loss of generality $\alpha = - x_i - x_j$. 
Then $n - j + 1 > \overline{d_0}$, where $\overline{d} = \min \{ d \delta : d_1 \geq i \}$. Moreover, $\tau(\alpha) = - x_{n-i+1} - x_{n-j+1} \not\in \Phi$,
so $ j > \tilde{d_0} $, where $\tilde{d} = \min \{ d \in \delta : d_1 \geq n - i + 1 \}$. This yields:

\[ \tilde{d_0} < j \leq n - \overline{d_0}  , n - \tilde{d_1} < i \leq \overline{d_1}  \]

\noindent Now suppose $\delta$ is $\Pi$-symmetric. Then $\overline{d_0} = \overline{d_1}$ and $\tilde{d_0} = \tilde{d_1}$
and consequently $n = \tilde{d_0} + n - \tilde{d_0} < n - \overline{d_0} + \overline{d_0} = n$, which is a contradiction.   

\end{proof}

Note that given $\Pi$-symmetry the even and odd symmetry conditions coincide. Consequently if $D$ has maximal odd dimension
then $D_0$ is automatically measurable. Also for all odd roots $\alpha$ one has $\alpha \in \Phi$ if and only if $-\alpha \in \Phi$.
So for $D$ to be strongly measurable one needs to consider the roots $2x_i$ whose negatives are no roots.

\begin{satz}
 
A flag domain $D$ is strongly measurable if and only if $n = 2m$ is even and $m \vert m \in \delta$.

\end{satz}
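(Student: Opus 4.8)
\emph{Proof proposal.} The plan is to reduce the statement to a supertrace computation, exactly as for the orthosymplectic series. Strong measurability of $D$ means that $D$ carries a $G_{\mathbb R}$-invariant Berezinian and $D_0$ is measurable; by Theorem~4.13 of \cite{AH} and the reduction to the even isotropy action used in the proof of the characterisation theorem, the first condition is equivalent to $\mathfrak l:=\mathfrak p\cap\tau\mathfrak p$ acting on $\mathfrak g/\mathfrak l$ with vanishing supertrace, and since $\mathrm{ad}(\mathfrak g_1)$ has vanishing supertrace this amounts to the vanishing on $\mathfrak h$ of the graded sum of the roots occurring in $\mathfrak g/\mathfrak l$ (even roots counted with a plus sign, odd roots with a minus sign). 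Using the explicit action $\tau(\pm x_i\pm x_j)=\pm x_{n-i+1}\pm x_{n-j+1}$ together with the automatic odd-symmetry of $\delta$, one checks that this graded sum can vanish only when $\delta$ is $\Pi$-symmetric, so that $D$ then has maximal odd dimension and $D_0$ is automatically measurable. Under $\Pi$-symmetry the even roots contribute zero because $\mathfrak p_0\cap\tau\mathfrak p_0$ is reductive, and the odd roots $\pm(x_i+x_j)$ with $i\ne j$ contribute zero because $\alpha\in\Phi\Leftrightarrow-\alpha\in\Phi$ for them (here the $\omega$-duality between the two odd pieces $\mathfrak g_1=S^2V$ and $\mathfrak g_{-1}=\Lambda^2V^*$ is what makes the $S^2V$- and $\Lambda^2V^*$-contributions match up). What remains is the contribution of the roots $2x_i$, whose negatives are not roots; it equals a nonzero multiple of $\sum_{i\in S}x_i$, where $S=\{\,i:\mathfrak g^{2x_i}\not\subseteq\mathfrak l\,\}$.

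On the Cartan subalgebra of the simple Lie superalgebra $\mathfrak p(n)$ the only relation among $x_1,\dots,x_n$ is $\sum_{i=1}^n x_i=0$, so $\sum_{i\in S}x_i=0$ forces $S=\emptyset$ or $S=\{1,\dots,n\}$; hence $D$ is strongly measurable iff every, or no, root vector $\mathfrak g^{2x_i}$ lies in $\mathfrak p\cap\tau\mathfrak p$. To turn this into a condition on $\delta$ I would use that, at the base point of the open $\pi_{\mathbb R}(n)$- (resp.\ $\pi_{\mathbb H}(m)$-)orbit, $\mathfrak p$ stabilises the standard flag that is isotropic for the odd form $\omega$, namely $V_a=\langle e_1,\dots,e_{d_0^a}\rangle\oplus\langle f_{n-d_1^a+1},\dots,f_n\rangle$, and that $\tau$ acts on $\Sigma$ by $x_i\mapsto x_{n-i+1}$. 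Since the root space $\mathfrak g^{2x_i}$ is spanned by the operator sending $f_i$ to $e_i$ and killing everything else, it lies in $\mathfrak p$ iff $f_i\in V_a$ implies $e_i\in V_a$ for every $a$, which comes down to $i\le M$ with $M:=\min_a\max(n-d_1^a,\,d_0^a)$; applying $\tau$ gives $\mathfrak g^{2x_i}\subseteq\tau\mathfrak p$ iff $n-i+1\le M$. Hence $\mathfrak g^{2x_i}\subseteq\mathfrak p\cap\tau\mathfrak p$ precisely for $n-M+1\le i\le M$, so $S=\emptyset$ iff $M=n$ (which, under $\Pi$-symmetry, means $\delta$ is the trivial sequence $0\vert0<n\vert n$), and $S=\{1,\dots,n\}$ iff $M\le n/2$, that is, iff some flag member has even dimension $\le n/2$ and odd dimension $\ge n/2$. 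Under $\Pi$-symmetry ($d_0^a=d_1^a$ for all $a$) the latter forces $n$ to be even, $n=2m$, and $m$ to occur among the dimensions, i.e.\ $m\vert m\in\delta$; conversely the member of dimension $m\vert m$ realises $M\le n/2$. Together with the cancellation of the other two root families and the measurability of $D_0$, this yields both directions.

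The step I expect to be hardest is identifying the correct base-point flag and reading off the incidence pattern for the $2x_i$: one must recall that the flags for $\mathfrak p(n)$ in the sense of Onishchik and Ivanova are $\perp$-isotropic for $\omega$, so that the naive flag $\langle e_1,\dots,e_{d_0^a},f_1,\dots,f_{d_1^a}\rangle$ is not the one that appears, and one must check that the antidiagonal twist defining $\pi_{\mathbb R}(n)$ (resp.\ $\pi_{\mathbb H}(m)$) is precisely what makes the $G_{0\mathbb{R}}$-orbit through this flag open. A second point requiring care is the cancellation of the $\pm(x_i+x_j)$-contributions, the only place where the two inequivalent odd parts $S^2V$ and $\Lambda^2V^*$ of $\mathfrak g$ have to be played off against each other; it is also what shows that the graded root sum cannot vanish unless $\delta$ is $\Pi$-symmetric.
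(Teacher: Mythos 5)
Your proposal is correct, and its combinatorial core coincides with the paper's: both arguments isolate the roots $2x_i$ (the only ones whose negatives are not roots), decide for which $i$ the root space $\mathfrak{g}^{2x_i}$ lies in $\mathfrak{p}\cap\tau\mathfrak{p}$, and find that this happens for $i$ in a middle interval which is empty exactly when $n=2m$ and $m\vert m\in\delta$. The packaging differs in one substantive way. The paper applies its reductivity criterion for strong measurability, so the condition is that \emph{no} $\mathfrak{g}^{2x_i}$ lies in $\mathfrak{p}\cap\tau\mathfrak{p}$; you instead use the vanishing of the graded root sum, which only forces your set $S$ to be empty or all of $\{1,\dots,n\}$. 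The extra branch $S=\emptyset$ is the one place where the supertrace criterion and the reductivity criterion genuinely part ways for $P(n)$ (since $\mathfrak{p}^n\cap\tau\mathfrak{p}^n$ need not vanish on the $2x_i$ even when $D$ is open); as you observe it occurs only for the trivial flag, but you should say explicitly that you discard it, since otherwise your stated dichotomy does not literally match the theorem. Two smaller points: your assertion that the graded sum can vanish only for $\Pi$-symmetric $\delta$ is unproved and unnecessary --- $\Pi$-symmetry is already forced by openness of $D$ via the preceding theorem on maximal odd dimension, and you should cite that instead; and your identification of $\mathfrak{g}^{2x_i}$ with the operator $f_i\mapsto e_i$ is the correct one (the paper writes $X(e_i)=f_i$, a slip), your incidence condition $i\le M$ with $M=\min_a\max(n-d_1^a,d_0^a)$ being the mirror image of the paper's condition $n-i+1\le\overline{d}$ under the relabelling $e_i\mapsto e_{n-i+1}$ of the base flag, so the two computations agree and yield the same answer.
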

 
\begin{proof}

If $n = 2m+1$ is odd then $\alpha = 2x_{m + 1} \in \Phi \cap \tau\Phi$ in any case(by virtue of the condition for maximal odd dimension), 
so $D$ cannot be measurable.

Now let $n = 2m$ be even. $2x_i$ is an element of $\Phi$, if and only if there is some $X \in \mathfrak{p}$ such that $X(e_i) = f_i$. This requires
$n - i + 1 \leq \overline{d}$, where $\overline{d} \vert \overline{d} = \min \{ d \vert d \in \delta: i \leq d \}$. 
This implies $n + 1 \leq 2\overline{d}$. Analogously $2x_i \in \tau\Phi$ if and only if $i \leq \tilde{d}$, 
where $\tilde{d} \vert \tilde{d} = \min \{ d \vert d \in \delta: n - i + 1 \leq d \}$. By the odd symmetry condition $\tilde{d} = n - \underline{d} + 1$,
where $\underline{d} \vert \underline{d} = \max \{ d \vert d \in \delta : d < i \}$. Summarizing this, one has $2x_i \in \Phi \cap \tau\Phi$ if and only if

\[ 2 (\underline{d} + 1) \leq n + 1 \leq 2 \overline{d} \]

\noindent Now if $m \vert m \not\in \delta$, then $2x_m \in \Phi \cap \tau\Phi$, as in that case $\underline{d} < m$ and $\overline{d} > m$, 
so the above condition is satisfied. Conversely if $m \vert m \in \delta$, then $\underline{d} + 1 $ and $\overline{d}$ are always
either both less or equal to $m$ or both greater than $m$. Consequently $2x_i \not\in \Phi \cap \tau\Phi$ for all $1 \leq i \leq n$ and
therefore $D$ is measurable.  
 
\end{proof}

\subsection{Type Q(n)}

  
The Lie superalgebras of type $Q$ have the important trait that all root spaces are $1 \vert 1$-dimensional and therefore 
for any parabolic subalgebra $\mathfrak{p} \subseteq \mathfrak{g}$ one always has $\dim_1(\mathfrak{p}) = \dim_0(\mathfrak{p})$.
Therefore if $D_0$ is open, then $D$ will automatically have maximal odd dimension. Moreover as the root spaces are $1 \vert 1$- dimensional,
every root is even and odd and a graded sum of roots is therefore always zero. This yields the following classification result:

\begin{satz}
 
A flag domain $D$ always has maximal odd dimension and is always weakly measurable. It is strongly measurable if and only if $D_0$ is measurable.

\end{satz}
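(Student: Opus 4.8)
The plan is to make precise the two remarks stated just before the theorem. Recall that for a type $Q$ superalgebra every root space $\mathfrak{g}^\alpha$ is $1\vert1$-dimensional and the Cartan $\mathfrak{h}$ is $n\vert n$-dimensional; since the defining antilinear involution $\tau$ preserves the parity grading, permutes the root spaces and stabilises $\mathfrak{h}$, any $\tau$-stable subspace assembled from $\mathfrak{h}$ and whole root spaces satisfies $\dim_0=\dim_1$. In particular this ``balancedness'' applies to $\mathfrak{p}$, to $\tau\mathfrak{p}$, and to $\mathfrak{l}:=\mathfrak{p}\cap\tau\mathfrak{p}$, the complexification of the isotropy algebra $\mathfrak{l}_\mathbb{R}$ of $D\cong G_\mathbb{R}/L_\mathbb{R}$.

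\emph{Maximal odd dimension.} If $D$ is a flag domain then $D_0$ is open, so $\dim_0\mathfrak{l}=\dim_0\mathfrak{p}$; balancedness of $\mathfrak{p}$ and of $\mathfrak{l}$ then forces $\dim_1\mathfrak{l}=\dim_1\mathfrak{p}$, whence $\dim_1(\mathfrak{g}/\mathfrak{l})=\dim_1(\mathfrak{g}/\mathfrak{p})$, which is exactly maximality of the odd dimension of $D$. (Equivalently: in type $Q$ the set $\Phi^c$ indexes the odd root spaces contributing to $\mathfrak{g}/\mathfrak{p}$ in the same way it indexes the even ones, so by the codimension formula the odd part of $\mathrm{codim}_Z(D)$ also equals $\vert\Phi^c\cap\tau\Phi^c\vert$, which vanishes once $D_0$ is open — in fact $D$ itself is then open.)

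\emph{Measurability.} By Theorem 4.13 of \cite{AH}, together with the reduction carried out in the proof of the strong-measurability theorem, $D$ carries a $G_\mathbb{R}$-invariant Berezinian density as soon as the isotropy representation has vanishing supertrace, and for this it suffices that the graded sum of the roots occurring in $\mathfrak{g}/\mathfrak{l}$ be zero: an odd element of $\mathfrak{l}$ acts by an odd operator, hence with supertrace $0$; the nilradical of $\mathfrak{l}_0$ acts nilpotently, hence with ordinary trace $0$ on each of $(\mathfrak{g}/\mathfrak{l})_0$ and $(\mathfrak{g}/\mathfrak{l})_1$; the semisimple part of a Levi factor of $\mathfrak{l}_0$ acts with trace $0$, being a sum of commutators; and a central toral element acts with supertrace equal to the value there of that graded sum. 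But in type $Q$ each root $\alpha$ contributes a $1\vert1$-dimensional space carrying weight $\alpha$ on the even line and weight $\alpha$ on the odd line, so every graded sum of roots — in particular the one over $\mathfrak{g}/\mathfrak{l}$ (and likewise over $\mathfrak{g}/\mathfrak{p}$, as in the weak-measurability arguments for the other types) — is zero. Hence a $G_\mathbb{R}$-invariant Berezinian density always exists.

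\emph{Conclusion and main obstacle.} Since $D$ always carries such a density, the remaining dichotomy is purely definitional: $D$ is strongly measurable exactly when $D_0$ is measurable, and weakly measurable exactly when $D_0$ is not. I expect no genuine obstacle here; type $Q$ is the easiest of all the cases treated. The only points deserving a word of care are the two implicit lemmas used above: that $\mathfrak{l}=\mathfrak{p}\cap\tau\mathfrak{p}$ inherits the balanced-dimension property, and that the non-toral pieces of $\mathfrak{l}_0$ contribute nothing to the supertrace — both of which follow immediately from the $1\vert1$-structure of the root spaces and the $\tau$-invariance of the parity grading.
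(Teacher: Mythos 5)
Your argument is correct and is essentially the paper's own: the paper disposes of type $Q$ in the two sentences preceding the theorem, using exactly your two observations --- every root space is $1\vert 1$-dimensional, so openness of $D_0$ already forces maximal odd dimension, and every graded sum of roots vanishes, so the supertrace of the isotropy representation is always zero and an invariant Berezinian density always exists --- after which the strong/weak dichotomy is purely definitional. One slip: the step ``$D_0$ is open, so $\dim_0\mathfrak{l}=\dim_0\mathfrak{p}$'' is false (openness gives $\dim_0\mathfrak{l}=2\dim_0\mathfrak{p}-\dim_0\mathfrak{g}$, which still transfers to the odd part by balancedness of $\mathfrak{g}$, $\mathfrak{p}$ and $\mathfrak{l}$); your parenthetical via the codimension formula $\mathrm{codim}_Z(D)=\vert\Phi^c\cap\tau\Phi^c\vert$ is the correct version of this argument and is what the paper implicitly uses, so nothing essential is lost.
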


\section{Even real forms and measurable flag domain}
\subsection{Definition of even real forms and characterization of strong measurability}
As the Berezinian module of a flag domain $D = G_\mathbb{R}/L_\mathbb{R}$ is $1$-dimensional the action of $\mathfrak{l}_{1\mathbb{R}}$
on it is necessarily trivial. It is therefore reasonable to consider instead real forms of the even group $G_0$ 
rather than real forms of the whole supergroup $G$. However, as the action of the $\mathbb{C}$-antilinear involution $\tau$
is central to the classification, one should actually consider a real form $G_{0\mathbb{R}}$ defined by an isomorphism of $\mathfrak{g}$.
This leads to the following definition:

\begin{def1}
\begin{enumerate}
 \item Let $\mathfrak{g}$ be a complex Lie superalgebra. A real Lie algebra $\mathfrak{g}_{0\mathbb{R}}$ is an even real form of $\mathfrak{g}$ 
if and only if $\mathfrak{g}_{0} = \mathrm{Fix}(\tau)$ for a $\mathbb{C}$-antilinear isomorphism $\tau: \mathfrak{g} \rightarrow \mathfrak{g}$
satisfying $\tau^2(X) = (-1)^{\vert X \vert} X$ for homogeneous $X \in \mathfrak{g}$.
 
\item Let $Z = G/P$ be a flag supermanifold. A $G_{0\mathbb{R}}$-flag domain in $Z$ is an open submanifold $D \subseteq Z$ such that
$D_0$ is an open $G_{0\mathbb{R}}$-orbit in $Z_0$. It is measurable if it allows a $G_{0\mathbb{R}}$-invariant Berezinian density. 
\end{enumerate}
\end{def1}

As before there is a distinction between strong and weak measurability and a characterization theorem for strong measurability:

\begin{satz}
 
Let $Z = G/P$ be a flag supermanifold, $G_{0\mathbb{R}} = \mathrm{Fix}(\tau)$ an even real form and $D \subseteq Z$ a flag domain.
Then the following are equivalent:

\begin{enumerate}
 \item $D$ is strongly measurable
 \item $\mathfrak{p} \cap \tau\mathfrak{p}$ is reductive
 \item $\mathfrak{p} \cap \tau\mathfrak{p}$ is the reductive part of $\mathfrak{p}$
 \item $\tau \Phi^r = \Phi^r$ and $\tau \Phi^n = \Phi^c$
 \item $\tau\mathfrak{p}$ is $G_0$-conjugate to $\mathfrak{p}^{op}$.
\end{enumerate}

\end{satz}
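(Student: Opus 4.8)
The plan is to transcribe the proof of the analogous theorem for real forms of $G$ proved above, since the only object that has changed its meaning is $\tau$, now a $\mathbb{C}$-antilinear automorphism of $\mathfrak{g}$ with $\tau^2(X)=(-1)^{\vert X\vert}X$ rather than an honest involution. The equivalences $(2)\Leftrightarrow(3)\Leftrightarrow(4)\Leftrightarrow(5)$ are then unaffected: they involve only the complex parabolic subsuperalgebra $\mathfrak{p}$ and its transform $\tau\mathfrak{p}$, and the sole property of $\tau$ that enters is that it is a $\mathbb{C}$-antilinear automorphism permuting the root spaces of $\mathfrak{g}$ — the sign in $\tau^2$ plays no role. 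Hence, exactly as before, the reductive part of $\mathfrak{p}\cap\tau\mathfrak{p}$ is $\mathfrak{p}^r\cap\tau\mathfrak{p}^r$ and its nilpotent radical is $\mathfrak{p}^r\cap\tau\mathfrak{p}^n+\mathfrak{p}^n\cap\tau\mathfrak{p}^r$, which gives $(2)\Leftrightarrow(3)\Leftrightarrow(4)$; reading $(4)$ as the assertion that $\tau\mathfrak{p}$ has the same reductive part as $\mathfrak{p}$ but the opposite nilradical yields $(5)$ after conjugation by an element of $G_0$.

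For $(1)\Leftrightarrow(2)$ I would argue as in the proof of the corresponding theorem for real forms of $G$, with a minor simplification. Fix $z\in D_0$, let $L_{0\mathbb{R}}=(G_{0\mathbb{R}})_z$ with Lie algebra $\mathfrak{l}_{0\mathbb{R}}=\mathfrak{g}_{0\mathbb{R}}\cap\mathfrak{p}_0$; since $\mathfrak{p}_0\cap\tau\mathfrak{p}_0$ is $\tau$-stable and $\tau$ restricts to an antilinear involution on $\mathfrak{g}_0$, the complexification of $\mathfrak{l}_{0\mathbb{R}}$ is $\mathfrak{p}_0\cap\tau\mathfrak{p}_0$. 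By Theorem 4.13 in \cite{AH}, applied now to the action of the even real group $G_{0\mathbb{R}}$ on the supermanifold $D$, the existence of a $G_{0\mathbb{R}}$-invariant Berezinian density on $D$ is equivalent to the triviality of the $L_{0\mathbb{R}}$-action on $\Ber((\mathfrak{g}/(\mathfrak{p}\cap\tau\mathfrak{p}))^*)$, that is, to the vanishing of the supertrace of $\mathrm{ad}(X)$ on $\mathfrak{g}/(\mathfrak{p}\cap\tau\mathfrak{p})$ for all $X\in\mathfrak{l}_{0\mathbb{R}}$, equivalently for all $X\in\mathfrak{p}_0\cap\tau\mathfrak{p}_0$. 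Here the situation is in fact a shade easier than for real forms of $G$: the isotropy acts entirely through its even part, so the observation $\mathrm{ad}(\mathfrak{g}_1)\subseteq\mathfrak{gl}(\mathfrak{g})_1$ used there is superfluous. Now if $D$ is strongly measurable then $D_0$ is measurable, so by the classical characterization theorem \cite{W} the algebra $\mathfrak{p}_0\cap\tau\mathfrak{p}_0$ is reductive; from this, together with the vanishing of $\mathrm{str}(\mathrm{ad}(X))$ on $\mathfrak{g}$ (valid since $\mathfrak{g}$ is simple) and on $\mathfrak{g}/(\mathfrak{p}\cap\tau\mathfrak{p})$, one deduces that $\mathrm{ad}(X)$ has trace zero on $\mathfrak{p}_1\cap\tau\mathfrak{p}_1$; splitting $\mathfrak{p}_1\cap\tau\mathfrak{p}_1$ into its reductive and nilpotent contributions exactly as in the classical argument and feeding in this trace condition forces the nilpotent contributions to vanish, so $\mathfrak{p}\cap\tau\mathfrak{p}$ is reductive. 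Conversely, if $\mathfrak{p}\cap\tau\mathfrak{p}$ is reductive then $\mathfrak{l}_{0\mathbb{R}}$ is real reductive, all the relevant supertraces vanish, and \cite{AH} produces a $G_{0\mathbb{R}}$-invariant Berezinian density; since $\mathfrak{p}_0\cap\tau\mathfrak{p}_0$ is then reductive, $D_0$ is measurable by \cite{W}, and $D$ is strongly measurable.

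The one point that genuinely needs to be checked, and which I expect to be the main obstacle, is the legitimacy of the appeal to Theorem 4.13 of \cite{AH} in the even setting: one must identify $D$, as a $G_{0\mathbb{R}}$-homogeneous supermanifold, with the datum of $L_{0\mathbb{R}}$ together with the super vector space $\mathfrak{g}/(\mathfrak{p}\cap\tau\mathfrak{p})$ carrying the adjoint $L_{0\mathbb{R}}$-action, so that the fiber of the Berezinian bundle at $z$ is $\Ber((\mathfrak{g}/(\mathfrak{p}\cap\tau\mathfrak{p}))^*)$. This is where the $\mathbb{C}$-antilinearity of $\tau$ actually intervenes, through the passage from the real tangent space of $D_0$ at $z$ to its complexification $\mathfrak{g}_0/(\mathfrak{p}_0\cap\tau\mathfrak{p}_0)$; once this bookkeeping is settled, no new phenomenon occurs and the rest of the proof is a verbatim copy of the one given for honest real forms.
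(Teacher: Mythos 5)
Your reduction of $(2)\Leftrightarrow(3)\Leftrightarrow(4)\Leftrightarrow(5)$ to the decomposition of $\mathfrak{p}\cap\tau\mathfrak{p}$ into $\mathfrak{p}^r\cap\tau\mathfrak{p}^r$ plus $\mathfrak{p}^r\cap\tau\mathfrak{p}^n+\mathfrak{p}^n\cap\tau\mathfrak{p}^r$ is exactly what the paper does, and your observation that the sign in $\tau^2$ is harmless there is correct. The problem is the step you yourself flag as the main obstacle, namely the appeal to Theorem 4.13 of \cite{AH} for $(1)\Leftrightarrow(2)$: that theorem concerns invariant Berezin integration on homogeneous supermanifolds $G/H$ of a Lie supergroup, and $D$ is \emph{not} a homogeneous supermanifold of the purely even group $G_{0\mathbb{R}}$ --- the quotient $G_{0\mathbb{R}}/L_{0\mathbb{R}}$ is the ordinary manifold $D_0$ and carries none of the odd directions of $D$. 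Relatedly, the fibre of $\Ber(D)$ at the base point is built from the full tangent space $(\mathfrak{g}/\mathfrak{p})^\mathbb{R}$ of $Z$, and identifying its complexification with $\mathfrak{g}/(\mathfrak{p}\cap\tau\mathfrak{p})$ requires $\mathfrak{p}_1+\tau\mathfrak{p}_1=\mathfrak{g}_1$, which is not among the hypotheses (only $D_0$ is assumed open). So, as written, the proof has a genuine gap at its central step.

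The paper closes this gap by not using \cite{AH} here at all. Since $D_0$ is an open $G_{0\mathbb{R}}$-orbit, the isotropy algebra $\mathfrak{g}_{0\mathbb{R}}\cap\mathfrak{p}$ contains a maximally compact Cartan subalgebra $\mathfrak{h}_\mathbb{R}$; an invariant Berezinian density amounts to triviality of the isotropy character on the Berezinian line of $(\mathfrak{g}/\mathfrak{p})^\mathbb{R}$, and one computes directly, for $H\in\mathfrak{h}_\mathbb{R}$,
\[ \mathrm{str}(\tilde{\mathrm{ad}}(H)) = \sum_{\alpha\in\Phi_0^c}\alpha(H)+\sum_{\alpha\in\tau\Phi_0^c}\alpha(H)-\sum_{\alpha\in\Phi_1^c}\alpha(H)-\sum_{\alpha\in\tau\Phi_1^c}\alpha(H), \]
using that $\overline{\alpha(H)}=(\tau\cdot\alpha)(H)$ on the compact Cartan. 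Measurability of $D_0$ kills the even sums, and vanishing of the remaining odd sums for all $H$ is equivalent to $\tau\Phi_1^c=-\Phi_1^c$; this gives $(1)\Leftrightarrow(4)$ directly, bypassing both the homogeneity issue and your detour through reductivity of $\mathfrak{p}_1\cap\tau\mathfrak{p}_1$. If you wish to keep your outline, you must replace the citation of \cite{AH} by this elementary equivariant-line-bundle argument over the homogeneous body $D_0$.
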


\begin{proof}
 
Let $z_0 \in D_0$ be a base point and $\mathfrak{p} = \mathrm{Stab}_\mathfrak{g}(z_0)$. As $D_0$ is an open $G_{0\mathbb{R}}$-orbit
$\mathfrak{p} \cap \mathfrak{g}_{0\mathbb{R}}$ contains a maximal compact Cartan subalgebra $\mathfrak{h}_\mathbb{R}$ of $\mathfrak{g}_{0\mathbb{R}}$.
The adjoint representation of $G$ induces a representation $\tilde{\mathrm{ad}}$ of $\mathfrak{h}_\mathbb{R}$ on $(\mathfrak{g}/\mathfrak{p})^\mathbb{R}$
Let $H \in \mathfrak{h}_\mathbb{R}$. Then

\[ \mathrm{str}(\tilde{\mathrm{ad}}(H)) = 2 \sum_{\alpha \in \Phi_0^c} \mathrm{Re} \ \alpha(H) - 2 \sum_{\alpha \in \Phi_1^c} \mathrm{Re} \ \alpha(H)\]
\[ = \sum_{\alpha \in \Phi_0^c} (\alpha(H) + \overline{\alpha(H)}) - \sum_{\alpha \in \Phi_1^c} (\alpha(H) + \overline{\alpha(H)})\]
\[ = \sum_{\alpha \in \Phi_0^c} (\alpha(H) + \tau \cdot \alpha(H)) - \sum_{\alpha \in \Phi_1^c} (\alpha(H) + \tau \cdot \alpha(H))\]
\[ = \sum_{\alpha \in \Phi_0^c} \alpha(H) + \sum_{\alpha \in \tau\Phi_0^c} \alpha(H) - \sum_{\alpha \in \Phi_1^c} \alpha(H) - \sum_{\alpha \in \tau\Phi_1^c} \alpha(H)\]

Now assume $D_0$ is measurable. Then the sums over the even roots are zero and one obtains

\[ \mathrm{str}(\tilde{\mathrm{ad}}(H)) = - \sum_{\alpha \in \Phi_1^c} \alpha(H) - \sum_{\alpha \in \tau\Phi_1^c} \alpha(H) = 0 \]
if and only if $\tau\Phi_1^c = -\Phi_1^c$. So 1. and 4. are equivalent and equivalence of 2. - 5. follows again from the decomposition
of $\mathfrak{p}$ into its reductive and nilpotent parts. 

\end{proof}

\subsection{Classification of even real forms and measurability}

A classification of the isomorphisms $\tau$ defining even real forms is given in \cite{Ser}. They are the following:

Let $\mathfrak{g} = (\mathfrak{p})\mathfrak{sl}_{n \vert m}(\mathbb{C})$. Then there are the following real forms:

\begin{enumerate}
 \item The unitary groups $G_{0\mathbb{R}} =SU(p,n-p) \times SU(q, m-q) \times U(1)$. 
In this case the action on $\Sigma$ is given by $\tau(\alpha) = - \alpha$
for all $\alpha \in \Sigma$.
 \item If $m = 2k$ is even there is an even real form with $G_{0\mathbb{R}} = SL_n(\mathbb{R}) \times SL_k(\mathbb{H}) \times \mathbb{R}^{>0}$.
The action on the root system is given by
$\tau(x_j - x_i) = x_{n-j+1} - x_{n-i+1}, \tau(y_j - y_i) = y_{m-j+1} - y_{m-i+1}, \tau(y_j - x_i) = y_{m-j+1} - x_{n-i+1}$.
 \item If $n = m$ there is an even real form with $G_{0\mathbb{R}} = SL_n(\mathbb{C})$.
In that case the action on the root system is $\tau(x_j - x_i) = y_j - y_i, \tau(y_j-x_i) = x_j - y_i$. 
\end{enumerate}

Note that all given actions on the root system were already realized by real forms and that the action of $US\pi(n)$ on the root system
is not realized by an even real form.

Now suppose $\mathfrak{g} = \mathfrak{osp}_{n \vert 2m}(\mathbb{C})$. Then there are the even real forms $G_{0\mathbb{R}} = SO(p,q) \times Sp(2r,2s)$.
Unless $p$ and $q$ are both odd, the action on the root system is trivial and if $p$ and $q$ are both odd, the action on the root system
is identical to that of the real form $\mathrm{Osp}(p,q \vert 2m)$. If $n = 2k$ is even then there is another even real form which has
$G_{0\mathbb{R}} = SO^*(2k) \times Sp_\mathbb{R}(2m)$ and its action on the root system is trivial. Here again, the results for the even
real forms of $D(2,1,\alpha)$ are identical to those for $D(2,1)$. Contrary to the case of ordinary real forms, the simple Lie superalgebras
of type $P$ and $Q$ are excluded from this list as they do not allow even real forms. 

As the actions of the even real forms on the root systems have already been realized by real forms 
the conditions for weak and strong measurability are the same as for the real forms in question.
The correspondence is the following:

\noindent \begin{tabular}{ p{6cm} | p{6cm} }
 
 real form & even real form  \\ \hline
  $SU(p,n-p \vert q,m-q)$ & $SU(p,n-p) \times SU(q,m-q) \times U(1)$ \\ \hline
  $SL_{n \vert m}(\mathbb{R}),SL_{k \vert l}(\mathbb{H})$ & $SL_{n}(\mathbb{R}) \times SL_l(\mathbb{H}) \times \mathbb{R}^{>0}$ \\ \hline
  $ ^0PQ(n)$ & $SL_{n}(\mathbb{C})$ \\ \hline
  $US\pi(n)$  & none \\ \hline
  $\mathrm{Osp}(2p+1,2q+1 \vert 2m)$  & $SO(2p+1,2q+1) \times Sp(2m)$ \\ \hline

\end{tabular}
\section{Tables}

The three symmetry conditions are denoted by $\ev$, $\odd$ and three $\Pi$ respectively. For the even and odd symmetry condition,
the respective symmetrizability conditions are denoted by $\ev^*$ and $\odd^*$.

\begin{rem}
Note that the table below includes all information on the classical case as well:
The simple Lie algebras $A_{n-1}$ are realized by $\mathfrak{sl}_n(\mathbb{C})$ and the criteria for
measurability are analogous to the ones for the real forms of $A(n-1,m-1)$ given below. Moreover, there are
 identifications $B_n \cong B(n,0), D_n \cong D(n,0)$ and $C_m \cong D(0,m)$ and the exceptional simple
Lie algebras are explicitly contained in the table. 
\end{rem}

\medskip

\noindent \begin{tabular}{p{1.3cm} | p{3.7cm} | p{2.3cm} | p{2.5cm} | p{2cm}}
 
Type & real form & maximal odd dimension & weak measurability  & strong measurability \\ \hline
A(n,m) & $\mathfrak{sl}_{n+1 \vert m+1}(\mathbb{R}), \mathfrak{sl}_{k \vert l}(\mathbb{H})$ & $\ev^*$ & - & $\ev$ \\ \hline
       & $\mathfrak{su}(p, n-p \vert q, m-q)$ & always & - & always \\ \hline
A(n,n) & $\mathfrak{psl}_{n+1 \vert n+1}(\mathbb{R}), \mathfrak{psl}_{k \vert k}(\mathbb{H})$ & $\ev^*$ & $\Pi$ or $\mathbb{P}(\mathbb{C}^{n \vert n})$ & $\ev$ \\ \hline
       & $^0\mathfrak{pq}(n+1)$ & $\odd^*$ & $\Pi$ & $\odd$ \\ \hline
       & $\mathfrak{us\pi}(n+1)$ & $\Pi$ & - & $\Pi$ \\ \hline
B(n,m) & $\mathfrak{osp}(p,q \vert 2m)$ & always & - & always \\ \hline
C(m)   & $\mathfrak{osp}(2 \vert 2r, 2s)$ & always & - & always \\ \hline
D(n,m) & $\mathfrak{osp}(2p,2q \vert 2m)$ & always & - & always \\ \hline
       & $\mathfrak{osp}^*(2n \vert 2r, 2s)$ & always & - & always \\ \hline
       & $\mathfrak{osp}(2p+1,2q+1 \vert 2m)$ & $n \vert d \not\in \delta, d < m$ & $ \exists  1 \leq d \leq  \min \{ n - 1, m \}: (n - d - 1 \vert m - d < n \vert m) \subseteq \delta $ & $n \vert m \not\in \delta$ or \newline $n-1 \vert m \in \delta$ \\ \hline 
D(2,1,$\alpha$) & any & as for D(2,1) \\ \hline
$E_6$ & $E_{6,C_4},E_{6,F_4}$ & - & - & $\sigma(J) = J$ \\ \hline
$E_6$ & others & - & - & always \\ \hline
$E_8,E_7$ & any & - & - & always \\ \hline
$F_4,F(4)$ & any & always & - & always \\ \hline
$G_2,G(3)$ & any & always & - & always \\ \hline
P(n)   & $\mathfrak{s\pi}_\mathbb{R}(n), \mathfrak{s\pi}^*(n)$ & $\Pi$ & - & $n = 2k$ and \newline $k \vert k \in \delta$ \\ \hline
Q(n)   & $\mathfrak{pq}_\mathbb{R}(n), \mathfrak{pq}_\mathbb{H}(k), \mathfrak{upq}(p,q)$ & always & always & as for $A_n$\\ \hline

\end{tabular}
\newpage
The respective table for the even real forms is the following:

\noindent \begin{tabular}{p{1.3cm} | p{4cm}  | p{4cm} | p{4cm}}
 
Type & even real form & weak measurability  & strong measurability \\ \hline
A(n,m) & $\mathfrak{sl}_{n+1}(\mathbb{R}) \oplus \mathfrak{sl}_k(\mathbb{H}) \oplus \mathbb{R} $ & - & $\ev$ \\ \hline
       & $\mathfrak{su}(p, n-p) \oplus \mathfrak{su}(q, m-q) \oplus \mathfrak{u}(1)$ & - & always \\ \hline
A(n,n) & $\mathfrak{sl}_{n+1}(\mathbb{R}) \oplus \mathfrak{sl}_{k}(\mathbb{H})$ & $\Pi$ or $\mathbb{P}(\mathbb{C}^{n \vert n})$ & $\ev$ \\ \hline
       & $\mathfrak{sl}_{n+1}(\mathbb{C})$ & $\Pi$ & $\odd$ \\ \hline
B(n,m) & $\mathfrak{so}(p,q) \oplus \mathfrak{sp}(2r,2s)$ & - & always \\ \hline
C(m)   & $\mathfrak{so}(2) \oplus \mathfrak{sp}(2r, 2s)$ & - & always \\ \hline
D(n,m) & $\mathfrak{so}(2p,2q) \oplus \mathfrak{sp}(2r,2s)$ & - & always \\ \hline
       & $\mathfrak{so}^*(2n) \times \mathfrak{sp}_\mathbb{R}(2m)$ & - & always \\ \hline
       & $\mathfrak{so}(2p+1,2q+1) \oplus \mathfrak{sp}(2r,2s)$ & $ \exists  1 \leq d \leq  \min \{ n - 1, m \}: (n - d - 1 \vert m - d < n \vert m) \subseteq \delta $ & $n \vert m \not\in \delta$ or \newline $n-1 \vert m \in \delta$ \\ \hline 
D(2,1,$\alpha$) & any & as for D(2,1) \\ \hline

\end{tabular}

\begin{rem}

\item This paper will be part of the author's dissertation at the Ruhr-Universit\"{a}t Bochum.

\end{rem}

\end{document}